\numberwithin{equation}{section}
\newtheorem{theorem}{Theorem}
\newtheorem{definition}[theorem]{Definition}
\newtheorem{lemma}[theorem]{Lemma}
\newtheorem{assumption}[theorem]{Assumption}
\newcommand{\mrd}{\mathrm{d}}
\newcommand{\var}{\mathrm{Var}}
\newcommand{\unif}{\mathrm{Unif}}
\newcommand{\cov}{\mathrm{Cov}}
\begin{document}
\title{Asymptotic Normality of Extensible Grid Sampling}
    \author{Zhijian He\footnote{Corresponding Author. Email: hezhijian87@gmail.com}\\Lingnan (University) College, Sun Yat-sen University
    	\and
    	Lingjiong Zhu\\Department of Mathematics, Florida State University}
\maketitle

\begin{abstract}
Recently, \cite{he:owen:2016} proposed the use of Hilbert's space filling curve (HSFC) in numerical integration as a way of reducing the dimension from $d>1$ to $d=1$. This paper studies the asymptotic normality of the HSFC-based estimate when using scrambled van der Corput sequence as input. We show that the estimate has an asymptotic normal distribution for functions in $C^1([0,1]^d)$, excluding the trivial case of constant functions. The asymptotic normality also holds for  discontinuous functions under mild conditions. It was previously known only that scrambled $(0,m,d)$-net  quadratures  enjoy the asymptotic normality for smooth enough functions, whose mixed partial gradients satisfy a H\"older condition. As a by-product, we find lower bounds for the variance of the HSFC-based estimate. Particularly, for nontrivial functions in $C^1([0,1]^d)$, the low bound is of order $n^{-1-2/d}$, which matches the rate of the upper bound  established in \cite{he:owen:2016}.

\smallskip
\noindent \textbf{Keywords:} asymptotic normality; Hilbert's space filling curve; van der Corput sequence; randomized quasi-Monte Carlo; extensible grid sampling
\end{abstract}

\section{Introduction}
Quasi-Monte Carlo (QMC) sampling has gained increasing popularity in numerical integration over the unit cube $[0,1]^d$ (see, e.g., \cite{lecu:2009, dick:2010, dick:2013}). It is known that functions with finite variation in the sense of Hardy and Krause can be integrated with an error of $O(n^{-1}(\log n)^d)$, compared to $O(n^{-1/2})$ for ordinary Monte Carlo sampling; see \cite{nied:1992} for details.

In this paper, we consider an alternative numerical integration based on Hilbert's space filling curve (HSFC) as introduced in \cite{he:owen:2016}.
An HSFC is a continuous mapping $H(x)$ from $[0,1]$ to $[0,1]^d$ for $d\geq1$. We take the convention that $H(x)=x$ for $d=1$. Formally, the Hilbert curve is defined through the limit of  a series of recursive curves.
An illustration of the generative process of the HSFC
with increasing recursion order for $d=2$ is presented in Figure~\ref{fig:hsfc}. For detailed definitions and the properties of the HSFC, we refer to \cite{he:owen:2016}. Let us consider the problem of estimating an integral over the $d$-dimensional unit cube $[0,1]^d$:
\begin{equation}\label{eq:problem}
\mu=\int_{[0,1]^d}f(X) \mrd X.
\end{equation}
The HSFC-based estimate takes the form
\begin{equation}\label{eq:hilbertest}
\hat{\mu}_{n}=\frac 1 n \sum_{i=1}^n f(H(x_i)),
\end{equation}
where $x_i$ are some well-chosen points in $[0,1]$. In this paper, we focus on the case of using van der Corput sequence (in base $b\ge 2$) with the nested uniform scrambling of \cite{owen:1995} as the inputs $x_i$, for which the estimate is extensible and unbiased. The nested uniform scrambling method is a kind of randomization techniques used commonly in randomized QMC; see \cite{owen:1995} for details and \cite{lecu:2002} for a survey of various randomized QMC methods. \cite{mato:1998} proposed a random linear scrambling method that does
not require as much randomness and storage.
\cite{he:owen:2016} found convergence rates of the extensible estimate for  functions that are  Lipschitz continuous or piecewise Lipschitz continuous. More precisely, for Lipschitz continuous functions, they derived a root mean-squared error (RMSE) of $O(n^{-1/2-1/d})$. For the piecewise Lipschitz continuous functions, an RMSE of $O(n^{-1/2-1/(2d)})$ is obtained. \cite{schr:2016} compared  the star discrepancies and RMSEs of using the van der Corput and the golden ratio generator sequences. 

\begin{figure}[ht]
	\includegraphics[width=\hsize]{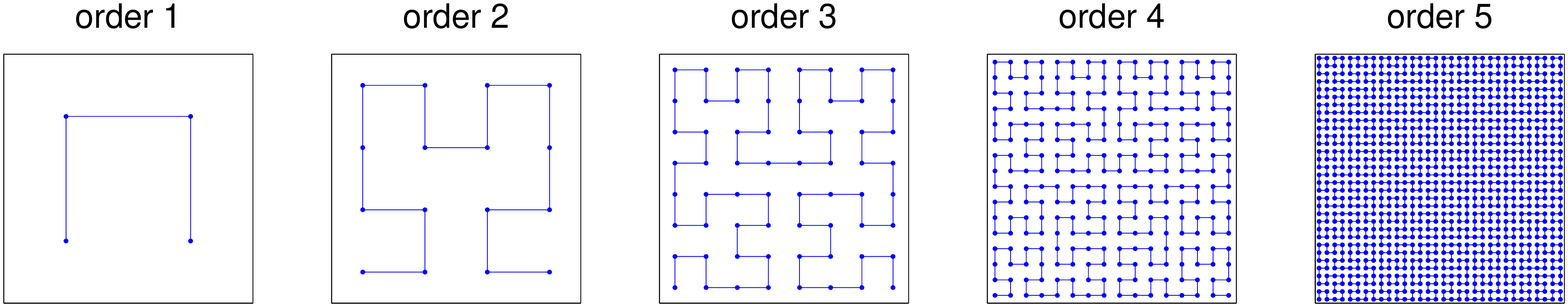}
	\caption{First five steps of the recursive construction of the HSFC for $d=2$.}\label{fig:hsfc}
\end{figure}

Actually, the upper bounds of the RMSE do not tell much about the error distribution. It is often of interest to obtain asymptotically valid confidence interval type guarantees as the usual Monte Carlo sampling. The central limit theorem (CLT) is invoked routinely to compute a confidence
interval on the estimate based on a normal approximation. Indeed, \cite{loh:2003} showed that the  nested uniform scrambled $(0,m,d)$-net (in base $b$) estimate has an
asymptotic normal distribution for smooth enough functions. Recently, by developing on the work by \cite{loh:2003}, \cite{basu:2016} showed that the scrambled geometric net estimate has an
asymptotic normal distribution for certain smooth functions defined
on products of suitable subsets of $\Re^d$. However, in most cases, 
the randomly-shifted lattice rule (another branch of randomized QMC techniques) estimates may be far
from the normally distributed ones; see \cite{lecu:2010} for discussions and examples.

In this paper we study the asymptotic normality of the HSFC-based estimate \eqref{eq:hilbertest} with sample sizes $n=b^m$, $m=0,1,2,\dots$. In practice, we often choose $b=2$ because the base $b=2$ is the same as used to approximate (and define) the Hilbert curve; see \cite{{butz:1971}} for the algorithm. The main contribution of our paper is two fold. First, for nontrivial functions in $C^1([0,1]^d)$, we establish a lower bound on $\var(\hat{\mu}_{n})$ which matches the upper bound $O(n^{-1-2/d})$ found in \cite{he:owen:2016}. A similar lower bound is established for piecewise smooth functions. Second, we prove that the asymptotic normality of the HSFC-based estimate $\hat{\mu}_{n}$ holds for  three classes of functions. In other words, we show that
$$\frac{\hat{\mu}_n-\mu}{\sqrt{\var(\hat{\mu}_{n})}}\to N(0,1)$$
in distribution as $n=b^m\to \infty$.
These results can be applied to stratified sampling on a regular grid with sample sizes $n=m^d$, but the  HSFC-based estimate we study, does not require the highly composite sample sizes that the grid sampling requires, particularly for large $d$. The main idea to prove the asymptotic normality is based on the Lyapunov CLT (see, e.g., \cite{chung:2001}), which is quite different from the techniques used in  \cite{loh:2003} and \cite{basu:2016}. Our proofs do not rely on the upper bounds established in \cite{he:owen:2016}.

The rest of the paper is organized as follows. In Section~\ref{sec:grid}, we study the asymptotic normality of stratified sampling on a regular grid with $n=m^d$, which can be viewed as a special case of the HSFC-based estimate \eqref{eq:hilbertest} using scrambled van der Corput sequence. In Section~\ref{sec:main}, we give lower bounds on $\var(\hat{\mu}_{n})$ and establish  asymptotic normality of the estimate $\hat{\mu}_{n}$ for three cases of integrands. In Section~\ref{sec:numer}, we give some empirical verifications on asymptotic normality for the HSFC sampling and other competitive QMC methods.  Section~\ref{eq:final} concludes this paper.

\section{Grid-based Stratified Sampling}\label{sec:grid}
In this section, we consider the regular grid sampling with sample sizes $n=m^d$.
The $d$-dimensional unit cube $[0,1]^d$ can be split into $m^d$
congruent subcubes with sides of length $1/m$, say, $E_i$, $i=1,\dots,n$. The grid-based stratified estimate of the integral \eqref{eq:problem}  is given by
\begin{equation}\label{eq:est}
\tilde{\mu}_n=\frac{1}{n}\sum_{i=1}^{n}f(U_{i}),
\end{equation}
where $U_i\sim \unif(E_i)$ independently.
Denote $\nabla f(X)=(\frac{\partial f(X)}{\partial X_1} ,\dots,\frac{\partial f(X)}{\partial X_d})^\top$ as the gradient vector of $f(X)$, and let $\Vert\cdot\Vert$ be the usual Euclidean norm. The next lemma discusses the variance of $\tilde{\mu}_n$, which was proved in \cite{owen:2013}. We prove it here also, because we make extensive use of that result. 
\begin{lemma}\label{lem:asymvar}
	Assume that $f(X)\in C^1([0,1]^d)$.  Then 
	\begin{equation}\label{eq:asyVar}
	\lim_{n\rightarrow\infty}n^{1+\frac{2}{d}}\var(\tilde{\mu}_n)
	=\frac{1}{12}\int_{[0,1]^{d}}\Vert\nabla f(X)\Vert^{2}\mrd X,
	\end{equation}
	where the limit is taken through values $n=m^d$ as $m\to \infty$.
\end{lemma}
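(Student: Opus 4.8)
Because the $U_i$ are independent, $\var(\tilde\mu_n)=n^{-2}\sum_{i=1}^n\var(f(U_i))$, so the plan is to evaluate each $\var(f(U_i))$ by a first-order Taylor expansion of $f$ about the center of the subcube $E_i$, and then to recognize the leading term as $n^{-2}$ times a Riemann sum for $\tfrac{1}{12}n^{-2/d}\int_{[0,1]^d}\norm{\nabla f(X)}^2\mrd X$. Throughout write $h=1/m=n^{-1/d}$ for the common side length of the cubes.

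First I would fix $i$, let $c_i$ denote the center of $E_i$, and write $f(x)=f(c_i)+\nabla f(c_i)^\top(x-c_i)+r_i(x)$ for $x\in E_i$, where by the fundamental theorem of calculus along the segment from $c_i$ to $x$ one has $r_i(x)=\int_0^1[\nabla f(c_i+t(x-c_i))-\nabla f(c_i)]^\top(x-c_i)\,\mrd t$. Since $\nabla f$ is continuous on the compact cube $[0,1]^d$ it is uniformly continuous there with some modulus $\omega$, and as $\norm{x-c_i}\le\tfrac{\sqrt d}{2}h$ on $E_i$, this yields the \emph{uniform} bound $|r_i(x)|\le\tfrac{\sqrt d}{2}\,\omega\bigl(\tfrac{\sqrt d}{2}h\bigr)\,h=:\epsilon_m h$ with $\epsilon_m\to0$ as $m\to\infty$. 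Noting that $U_i-c_i$ is uniform on $[-h/2,h/2]^d$, hence has independent coordinates with mean $0$ and variance $h^2/12$, I get $\var\bigl(\nabla f(c_i)^\top(U_i-c_i)\bigr)=\tfrac{h^2}{12}\norm{\nabla f(c_i)}^2$. Expanding
\[
\var(f(U_i))=\var\bigl(\nabla f(c_i)^\top(U_i-c_i)\bigr)+2\cov\bigl(\nabla f(c_i)^\top(U_i-c_i),\,r_i(U_i)\bigr)+\var(r_i(U_i)),
\]
and using $\var(r_i(U_i))\le\epsilon_m^2h^2$ (since $|r_i|\le\epsilon_m h$ on $E_i$), $\norm{\nabla f(c_i)}\le\norm{\nabla f}_\infty$, and the Cauchy--Schwarz bound on the covariance term, I would conclude $\var(f(U_i))=\tfrac{h^2}{12}\norm{\nabla f(c_i)}^2+O(h^2\epsilon_m)$ uniformly in $i$.

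Summing over $i=1,\dots,n$, multiplying by $n^{1+2/d}$, and using $n^{1+2/d}n^{-2}h^2=n^{-1}$, this gives
\[
n^{1+2/d}\var(\tilde\mu_n)=\frac{1}{12}\cdot\frac{1}{n}\sum_{i=1}^n\norm{\nabla f(c_i)}^2+O(\epsilon_m).
\]
Since $\norm{\nabla f(\cdot)}^2$ is continuous on $[0,1]^d$ and the $c_i$ are the centers of a regular partition into cubes of side $h\to0$, the average is a Riemann sum converging to $\int_{[0,1]^d}\norm{\nabla f(X)}^2\mrd X$, while $\epsilon_m\to0$; letting $m\to\infty$ yields the claim. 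The one point needing care is the \emph{uniform} remainder bound $|r_i(x)|\le\epsilon_m h$: the accumulated remainder in $\sum_i\var(f(U_i))$ is of the \emph{same} order $n^{-1-2/d}$ as the surviving main term apart from the factor $\epsilon_m$, so it is essential that $\epsilon_m\to0$, and this is precisely where the hypothesis $f\in C^1([0,1]^d)$ — rather than mere pointwise differentiability — enters, through the uniform continuity of $\nabla f$ on the compact cube. Everything else is routine bookkeeping with variances of independent summands.
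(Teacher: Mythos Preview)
Your argument is correct and follows essentially the same route as the paper's proof: Taylor-expand $f$ about the center $c_i$ of each subcube, compute the variance of the linear part as $\tfrac{h^2}{12}\norm{\nabla f(c_i)}^2$, control the remainder, and pass to the limit via a Riemann sum. If anything, your version is more careful than the paper's, since you make explicit (via uniform continuity of $\nabla f$ on the compact cube) that the Taylor remainder is $o(h)$ \emph{uniformly} in $i$, a point the paper leaves implicit when it writes $R_i=o(1/m)$ and then sums.
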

\begin{proof}
	Note that $U_i$ is uniformly distributed within the cube $E_i$ with sides of length $1/m$. Let $c_i$ be the center of $E_i$. Since $f\in C^1([0,1]^d)$, the first-order Taylor approximation gives 
	\begin{equation}
	f(U_i)=L_i+R_i,
	\end{equation}
	where $L_i=f(c_i)+\nabla f(c_i)^\top(U_i-c_i)$ and $R_i=o(1/m)$.
	For the linear term $L_i$, we have 
	$$\var(L_i)=\frac 1{12m^2}\sum_{i=1}^d \left(\frac{\partial }{\partial X_i}f(c_i)\right)^2=\frac 1{12m^2}\Vert\nabla f(c_i)\Vert^2,$$
	since $U_i-c_i\sim \unif [-1/(2m),1/(2m)]^d$. For the error term $R_i$, we have $\var(R_i)=o(1/m^2)$. Also, $\cov(L_i,R_i)=o(1/m^2)$. As a result,
	\begin{align*}
	\var(\tilde{\mu}_n)&=\frac{1}{n^2}\sum_{i=1}^n\var(f(U_i))\\
	&=\frac{1}{n^2}\sum_{i=1}^n\var(L_i)+o\left(\frac{1}{nm^2}\right)\\
	&=\frac{1}{12n^2m^2}\sum_{i=1}^n\Vert\nabla f(c_i)\Vert^2 + o\left(\frac{1}{nm^2}\right).
	\end{align*}
	Since $m=n^{1/d}$ and
	$$\lim_{n\to \infty}\frac 1 n\sum_{i=1}^n\Vert\nabla f(c_i)\Vert^2=\int_{[0,1]^{d}}\Vert\nabla f(X)\Vert^{2}\mrd X,$$
	we conclude that \eqref{eq:asyVar} holds.
\end{proof}

\begin{theorem}\label{thm:gridsCLT}
	If $f(X)\in C^1([0,1]^d)$ and $\sigma^2=(1/12)\int_{[0,1]^{d}}\Vert\nabla f(X)\Vert^{2}\mrd X>0$, then
	\begin{equation}\label{eq:gridsCLT}
	\frac{\tilde{\mu}_n-\mu}{\sigma n^{-1/2-1/d}}\to N(0,1),
	\end{equation}
	in distribution as $n=m^d\to\infty$.
\end{theorem}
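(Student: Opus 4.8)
The plan is to recognize that $\tilde{\mu}_{n}-\mu$ is, up to the factor $1/n$, a sum of \emph{independent but not identically distributed} centered random variables, and to invoke the Lyapunov central limit theorem \cite{chung:2001}. Write $Y_{i}=f(U_{i})-\mathbb{E}[f(U_{i})]$ for $i=1,\dots,n$. Since $\mathbb{E}[f(U_{i})]=m^{d}\int_{E_{i}}f(X)\,\mrd X$ and the cubes $E_{i}$ partition $[0,1]^{d}$, summing gives $\sum_{i=1}^{n}\mathbb{E}[f(U_{i})]=m^{d}\mu=n\mu$, so that $\tilde{\mu}_{n}-\mu=\frac{1}{n}\sum_{i=1}^{n}Y_{i}$ with the $Y_{i}$ independent and mean zero. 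Put $s_{n}^{2}=\sum_{i=1}^{n}\var(Y_{i})=n^{2}\var(\tilde{\mu}_{n})$. By Lemma~\ref{lem:asymvar}, $s_{n}^{2}=\sigma^{2}n^{1-2/d}(1+o(1))$, which in particular stays positive for large $n$ thanks to the hypothesis $\sigma^{2}>0$, and $\sqrt{\var(\tilde{\mu}_{n})}\,/\,(\sigma n^{-1/2-1/d})\to 1$. Hence, by Slutsky's theorem, it suffices to establish $s_{n}^{-1}\sum_{i=1}^{n}Y_{i}\to N(0,1)$, i.e.\ to verify a Lyapunov condition.

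The key step is a uniform bound on the summands. Since $f\in C^{1}([0,1]^{d})$, the gradient $\nabla f$ is continuous on the compact cube, hence bounded by some $M$, so $f$ is $M$-Lipschitz. Each $E_{i}$ has diameter $\sqrt{d}/m$, so with $c_{i}$ the center of $E_{i}$ one has $|f(U_{i})-f(c_{i})|\le M\sqrt{d}/m$ and therefore $|Y_{i}|\le 2M\sqrt{d}/m$ almost surely. Taking $\delta=1$, this yields $\sum_{i=1}^{n}\mathbb{E}[|Y_{i}|^{3}]\le n(2M\sqrt{d})^{3}m^{-3}=(2M\sqrt{d})^{3}n^{1-3/d}$, while $s_{n}^{3}=\sigma^{3}n^{3(1-2/d)/2}(1+o(1))$, so that
\begin{equation*}
\frac{1}{s_{n}^{3}}\sum_{i=1}^{n}\mathbb{E}\big[|Y_{i}|^{3}\big]\;\le\;\frac{(2M\sqrt{d})^{3}}{\sigma^{3}}\,\frac{n^{1-3/d}}{n^{3/2-3/d}}\,(1+o(1))\;=\;\frac{(2M\sqrt{d})^{3}}{\sigma^{3}}\,n^{-1/2}\,(1+o(1))\;\longrightarrow\;0 .
\end{equation*}
Thus the Lyapunov condition holds, the Lyapunov CLT gives $s_{n}^{-1}\sum_{i=1}^{n}Y_{i}\to N(0,1)$, and combining with the preceding normalization remarks and Slutsky's theorem yields \eqref{eq:gridsCLT}.

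I do not anticipate a genuine obstacle; the proof is essentially a bookkeeping exercise around the Lyapunov CLT. The points that do require attention are: (i) the summands are independent but \emph{not} i.i.d., which rules out the classical CLT and forces the Lyapunov route; (ii) the normalization in \eqref{eq:gridsCLT} equals $\sqrt{\var(\tilde{\mu}_{n})}$ only in the limit, so one must pass through Lemma~\ref{lem:asymvar} and Slutsky's theorem; and (iii) the hypothesis $\sigma^{2}>0$ is indispensable — it is precisely what guarantees that $s_{n}$ does not shrink faster than the generic rate $n^{(1-2/d)/2}$, keeping the Lyapunov ratio bounded by a constant times $n^{-1/2}\to 0$. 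When $\sigma^{2}=0$, i.e.\ $f$ is constant, $\tilde{\mu}_{n}\equiv\mu$ and there is nothing to prove, which is the only case the statement excludes.
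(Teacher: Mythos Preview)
Your proof is correct and follows essentially the same route as the paper: both use the $C^{1}$ hypothesis to obtain a Lipschitz bound on $f$, invoke Lemma~\ref{lem:asymvar} to pin down $s_{n}^{2}\sim\sigma^{2}n^{1-2/d}$, and then verify the Lyapunov condition to apply the Lyapunov CLT. The only cosmetic differences are that you fix $\delta=1$ (the paper keeps $\delta>0$ generic) and you make the Slutsky step explicit, which the paper leaves implicit.
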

\begin{proof}
Since $f\in C^1([0,1]^d)$, $f$ is Lipschitz continuous.  Then, for any $\delta>0$,
\begin{equation}\label{eq:bound2pdelta}
\mathbb{E}[\vert f(U_{i})-\mathbb{E}[f(U_{i})]\vert^{2+\delta}]
\leq C_{d,\delta}n^{-\frac{2+\delta}{d}},
\end{equation}
where $C_{d,\delta}>0$ is some constant that only depends on $d$ and $\delta$, and we used the fact that the diameter of $E_i$ is  $\sqrt{d}n^{-1/d}$.

Let $s_{n}^{2}=\sum_{i=1}^{n}\sigma_{i}^{2}$, where
$\sigma_{i}^{2}=\var(f(U_{i}))$. From Lemma~\ref{lem:asymvar}, we have
\begin{equation}
\lim_{n\rightarrow\infty}\frac{s_{n}^{2}}{n^{1-\frac{2}{d}}}=\frac{1}{12}\int_{[0,1]^{d}}\Vert\nabla f(X)\Vert^{2}\mrd X>0.
\end{equation}
Therefore, the Lyapunov condition
\begin{align*}
\lim_{n\rightarrow\infty}\frac{1}{s_{n}^{2+\delta}}\sum_{i=1}^{n}\mathbb{E}[\vert f(U_{i})-\mathbb{E}[f(U_{i})]\vert^{2+\delta}]
&\leq\limsup_{n\rightarrow\infty}C_{d,\delta}
\frac{n^{1-\frac{2+\delta}{d}}}{s_{n}^{2+\delta}}
\\&=\limsup_{n\rightarrow\infty}C_{d,\delta}n^{-\frac{\delta}{2}}\left(\frac{n^{1-\frac 2 d}}{s_n^2}\right)^{\frac{2+\delta}{2}}=0
\end{align*}
is satisfied. Using the Lyapunov CLT, we get \eqref{eq:gridsCLT}.
\end{proof}
To avoid the trivial case of constant functions (which yields an identically
zero variance), Theorem~\ref{thm:gridsCLT} assumes that $\int_{[0,1]^{d}}\Vert\nabla f(x)\Vert^{2}\mrd x>0$. Theorem~\ref{thm:gridsCLT} admits the asymptotic normality of $\tilde{\mu}_n$. The grid-based stratified estimate has variance $O(n^{-1-2/d})$, compared to Monte Carlo variance $O(n^{-1})$. This actually holds for Lipschitz continuous functions covering the class of functions $C^1([0,1]^d)$ in Theorem~\ref{thm:gridsCLT}.

\section{HSFC-based Sampling}\label{sec:main}
In this section, we study the HSFC-based estimate given by \eqref{eq:hilbertest}, where $x_i$ are the first $n=b^m$ points of the scrambled van der Corput sequence in base $b\ge 2$. 
Let $a_i$ be  the first $n$ points of the van der Corput sequence in base $b$ \cite{vdc:1935}. The integer $i-1\ge 0$ is written in  base $b$ as $i-1=\sum_{j=1}^\infty a_{ij} b^{j-1}$ for $a_{ij}\in\{0,\dots,b-1\}$. Then $a_i$ is then defined by
$$a_i=\sum_{j=1}^\infty a_{ij}b^{-j}.$$ The scrambled version of $a_1,\dots,a_n$ is $x_1,\dots,x_n$ written as $x_i = \sum_{j=1}^{\infty} x_{ij}b^{-j}$, where $x_{ij}$ are defined through random permutations of the $a_{ij}$. These permutations depend on $a_{ik}$, for $k<j$. More precisely, $x_{i1}=\pi(a_{i1})$, $x_{i2}=\pi_{a_{i1}}(a_{i2})$ and generally for $j\ge 2$
$$x_{ij}=\pi_{a_{i1}\dots a_{ij-1}}(a_{ij}).$$
Each random permutation is uniformly distributed over the $b!$ permutations of $\{0,\dots,b-1\}$, and the permutations are mutually independent.

In this setting,  thanks to the nice property of the nested uniform scrambling,  the data values in the scrambled sequence can be reordered such that $x_i\sim \unif(I_i)$ independently with $I_{i}$ for $i=1,\dots,b^m$. Let $E_{i}=H(I_{i})$. As used in \cite{he:owen:2016}, the estimate \eqref{eq:hilbertest} can be rewritten as
\begin{equation*}
\hat{\mu}_n = \frac 1n\sum_{i=1}^n f(X^{(i)}),
\end{equation*}
where $X^{(i)}=H(x_i)\sim\unif(E_i)$.  
This implies that the HSFC-based sampling is actually a stratified sampling because $\{E_i\}_{i=1}^n$ is a split of $[0,1]^d$. Figure~\ref{fig:grids} illustrates such splits of $[0,1]^2$ when $b=2$. 
\cite{he:owen:2016} proved the unbiasedness of $\hat{\mu}_{n}$ for any $f\in L^2([0,1]^d)$ and gave some upper bounds for  $\var(\hat{\mu}_{n})$ under certain assumptions on the class of
integrands $f$. Their proofs make use of the properties of the HSFC presented in the next lemma, which are also important in studying the asymptotic normality of the HSFC-based sampling. Denote $\lambda_d(\cdot)$ as the Lebesgue measure on $\Re^d$. 
\begin{lemma}\label{lem:ei}
	Let $A=H([p,q])$ for $0\leq p<q\le 1$. Then $\lambda_d(A)=\lambda_1([p,q])=q-p$. If $x\sim \unif([p,q])$, then $H(x)\sim \unif(A)$. Let $r$ be the diameter of $A$. Then $r\leq 2\sqrt{d+3}(q-p)^{1/d}$.
\end{lemma}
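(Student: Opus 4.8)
The plan is to derive all three assertions from the recursive, self-similar construction of $H$ shown in Figure~\ref{fig:hsfc}. Recall that at recursion level $m$ the curve subdivides $[0,1]$ into the $2^{dm}$ dyadic intervals $J_{m,j}$ of length $2^{-dm}$ and subdivides $[0,1]^d$ into $2^{dm}$ congruent closed subcubes $Q_{m,j}$ of side $2^{-m}$, listed in Hilbert order, with $H(J_{m,j})=Q_{m,j}$; moreover the partition at level $m+1$ refines that at level $m$, the restriction of $H$ to $J_{m,j}$ rescaled affinely to $[0,1]$ is again a Hilbert curve onto $Q_{m,j}$ up to a symmetry of the cube, and consecutive subcubes in the Hilbert order share a full $(d-1)$-dimensional face. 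Granting these structural facts, the lemma follows by bookkeeping.

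For the first two claims I would decompose $[p,q)$ into a countable disjoint union of half-open dyadic intervals $J_{m_k,j_k}$, leaving out only the point $q$; the images $Q_k:=H(\overline{J_{m_k,j_k}})$ are subcubes with pairwise disjoint interiors because the level partitions are nested. Hence
\[
\lambda_d(A)=\sum_k\lambda_d(Q_k)=\sum_k 2^{-dm_k}=\sum_k\lambda_1(J_{m_k,j_k})=q-p,
\]
which is the first assertion (the leftover point $q$ is $H$-mapped to a single point and contributes nothing). For the second, if $x\sim\unif([p,q])$ then $\Pr(x\in J_{m_k,j_k})=\lambda_d(Q_k)/(q-p)$, and conditionally on that event $x$ is uniform on the dyadic interval, so by the self-similarity above $H(x)$ is uniform on $Q_k$; summing over $k$ gives $\Pr(H(x)\in B)=\lambda_d(B\cap A)/(q-p)=\lambda_d(B)/\lambda_d(A)$ for every Borel $B\subseteq A$, i.e. $H(x)\sim\unif(A)$.

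For the diameter bound, set $\ell=q-p$ and let $m\ge0$ be the largest integer with $2^{-dm}\ge\ell$ (such $m$ exists since $\ell\le1$). As $[p,q]$ has length at most $2^{-dm}$, it meets at most two consecutive level-$m$ intervals, so $A\subseteq Q_{m,j}\cup Q_{m,j+1}$ for some $j$. Since these two subcubes share a facet, their union lies in an axis-aligned box with one side $2\cdot2^{-m}$ and the remaining $d-1$ sides equal to $2^{-m}$, whose diameter is $2^{-m}\sqrt{4+(d-1)}=2^{-m}\sqrt{d+3}$. Maximality of $m$ gives $2^{-d(m+1)}<\ell$, hence $2^{-m}<2\ell^{1/d}$, and therefore $r\le 2^{-m}\sqrt{d+3}<2\sqrt{d+3}\,(q-p)^{1/d}$.

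The genuinely delicate part is not any of the above calculations but the two structural properties of $H$ I am leaning on: that its restriction to a level-$m$ dyadic interval is, after rescaling and a cube symmetry, again a Hilbert curve, and that consecutive level-$m$ subcubes meet in a facet. Both are built into the recursive definition (e.g. Butz's algorithm \cite{butz:1971}) and are used implicitly in \cite{he:owen:2016}; carefully propagating the sequence of cube symmetries through the recursion to justify the self-similarity claim is the one step that requires attention, and everything else reduces to the dyadic decomposition argument sketched above.
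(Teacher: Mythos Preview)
The paper does not actually prove Lemma~\ref{lem:ei}; it merely records three properties of the Hilbert curve taken from \cite{he:owen:2016} and uses them as black boxes. Your proposal is therefore not competing with a proof in the paper, but it is a correct reconstruction of the standard argument behind those properties. In particular, your diameter argument---choose the coarsest dyadic level $m$ with $2^{-dm}\ge q-p$, observe that $[p,q]$ then touches at most two consecutive level-$m$ intervals, use that consecutive Hilbert subcubes share a facet so their union has diameter $2^{-m}\sqrt{d+3}$, and finish with $2^{-m}<2(q-p)^{1/d}$---is exactly the computation underlying the bound in \cite{he:owen:2016}.

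Two small remarks. First, your derivation of the uniform-distribution claim is slightly roundabout: once you have established that $H$ maps every dyadic interval of length $2^{-dm}$ onto a subcube of volume $2^{-dm}$, a $\pi$--$\lambda$ (or monotone class) argument gives directly that $H$ pushes one-dimensional Lebesgue measure forward to $d$-dimensional Lebesgue measure, from which both $\lambda_d(A)=q-p$ and $H(x)\sim\unif(A)$ follow in one line without invoking self-similarity a second time. Second, you are right that the two structural facts you flag---self-similarity of $H$ on dyadic subintervals, and facet-adjacency of consecutive Hilbert subcubes---are the only nontrivial inputs; both are indeed part of the recursive definition used in \cite{butz:1971} and \cite{he:owen:2016}, so citing them is appropriate here.
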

\subsection{Smooth Functions}\label{sec:smooth}
\cite{loh:2003} and \cite{basu:2016} focused on smooth functions whose mixed partial gradient satisfies a H\"older condition, which was first studied in \cite{owen:1997}. 
Here we work with a weaker smoothness condition,  in the sense that $f(X)\in C^1([0,1]^d)$ as required in Theorem~\ref{thm:gridsCLT}.

\begin{theorem}\label{thm:HilbertCLTsmooth}
	Assume that $f(X)\in C^1([0,1]^d)$ and $\sigma^2=\int_{[0,1]^{d}}\Vert\nabla f(X)\Vert^{2}\mrd X>0$. Then for all sufficiently large $n$, we have
	\begin{equation}\label{eq:lbsmooth}
\var(\hat{\mu}_{n})\geq \frac{\sigma^2}{96}2^{-2/d-d}n^{-1-2/d}.
	\end{equation}
	Also,
	\begin{equation}\label{eq:HilbertCLT}
	\frac{\hat{\mu}_{n}-\mu}{\sqrt{\var(\hat{\mu}_{n})}}\to N(0,1),
	\end{equation}
	in distribution as $n=b^m\to\infty$.
\end{theorem}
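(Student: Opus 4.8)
The plan is to reduce \eqref{eq:HilbertCLT} to the Lyapunov CLT exactly as in the proof of Theorem~\ref{thm:gridsCLT}, using that the $X^{(i)}=H(x_i)$ are independent with $X^{(i)}\sim\unif(E_i)$, so $\hat\mu_n-\mu=\frac1n\sum_{i=1}^n\bigl(f(X^{(i)})-\mathbb{E}f(X^{(i)})\bigr)$ is a normalized sum of independent, bounded, mean-zero summands. Two ingredients are needed: (i) a stratum-uniform $(2+\delta)$-th moment bound, immediate from Lipschitz continuity of $f$ and the diameter bound $\mathrm{diam}(E_i)\le 2\sqrt{d+3}\,n^{-1/d}$ of Lemma~\ref{lem:ei}, giving $\mathbb{E}[|f(X^{(i)})-\mathbb{E}f(X^{(i)})|^{2+\delta}]\le C_{d,\delta}\,n^{-(2+\delta)/d}$; and (ii) the matching lower bound $\var(\hat\mu_n)=n^{-2}\sum_i\var(f(X^{(i)}))\gtrsim n^{-1-2/d}$, i.e.\ \eqref{eq:lbsmooth}, which must be proved directly here since, unlike in Lemma~\ref{lem:asymvar}, the strata $E_i$ are not cubes. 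I would establish \eqref{eq:lbsmooth} first.

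For \eqref{eq:lbsmooth} the idea is that although $E_i=H(I_i)$ is a polyomino-like region rather than a cube, it contains a cube of comparable size. By the recursive, self-similar construction of the HSFC, $H$ maps every dyadic interval of length $2^{-dk}$ onto a subcube of $[0,1]^d$ of side $2^{-k}$; since $I_i$ has length $b^{-m}=n^{-1}$, taking $k$ minimal with $2^{-dk}\le\tfrac12 n^{-1}$ shows $I_i$ contains such a dyadic interval $J_i$, and minimality forces $2^{-dk}>2^{-d-1}n^{-1}$, so $Q_i:=H(J_i)\subseteq E_i$ is a cube of side $s_i>2^{-1-1/d}n^{-1/d}$. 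Writing $\sigma_i^2=\var(f(X^{(i)}))$ and conditioning on $\{X^{(i)}\in Q_i\}$, whose probability is $\lambda_d(Q_i)/\lambda_d(E_i)=ns_i^d$, the elementary bound $\var(Y)\ge\mathbb{P}(A)\var(Y\mid A)$ gives $\sigma_i^2\ge ns_i^d\,\var(f(V_i))$ with $V_i\sim\unif(Q_i)$; a first-order Taylor expansion on $Q_i$, exactly as in the proof of Lemma~\ref{lem:asymvar}, gives $\var(f(V_i))=\tfrac{s_i^2}{12}\|\nabla f(c_i)\|^2+o(n^{-2/d})$ uniformly in $i$, with $c_i$ the center of $Q_i$. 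Summing over $i$, using $s_i>2^{-1-1/d}n^{-1/d}$ in the main term, bounding the aggregate Taylor remainder by $o(n^{-2/d})\sum_i n\lambda_d(Q_i)\le o(n^{-2/d})\cdot n=o(n^{1-2/d})$, and using the Riemann-sum limit $\tfrac1n\sum_i\|\nabla f(c_i)\|^2\to\int_{[0,1]^d}\|\nabla f(X)\|^2\,\mrd X=\sigma^2$ (valid since $\|\nabla f\|^2$ is uniformly continuous, $c_i\in E_i$, $\mathrm{diam}(E_i)\to0$, $\sum_i\lambda_d(E_i)=1$), one obtains $\sum_i\sigma_i^2\ge\tfrac{\sigma^2}{96}2^{-2/d-d}\,n^{1-2/d}(1+o(1))$, which yields \eqref{eq:lbsmooth} for all sufficiently large $n$.

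With \eqref{eq:lbsmooth} in hand, put $s_n^2=\sum_i\sigma_i^2$, so $s_n^2\ge\tfrac{\sigma^2}{96}2^{-2/d-d}\,n^{1-2/d}$ for large $n$. Combining with the moment bound from (i), the Lyapunov ratio satisfies $s_n^{-(2+\delta)}\sum_{i=1}^n\mathbb{E}[|f(X^{(i)})-\mathbb{E}f(X^{(i)})|^{2+\delta}]\le C_{d,\delta}\,n^{1-(2+\delta)/d}s_n^{-(2+\delta)}=O(n^{-\delta/2})\to0$, the exponent arithmetic being identical to that in the proof of Theorem~\ref{thm:gridsCLT}. The Lyapunov CLT then gives $s_n^{-1}\sum_i\bigl(f(X^{(i)})-\mathbb{E}f(X^{(i)})\bigr)\to N(0,1)$ in distribution, which is \eqref{eq:HilbertCLT} since $\mathbb{E}\hat\mu_n=\mu$ by the unbiasedness result of \cite{he:owen:2016} and $\var(\hat\mu_n)=s_n^2/n^2>0$ for large $n$.

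The main obstacle is step (ii): producing a stratum-uniform, reasonably explicit lower bound of the correct order $n^{-2/d}$ on $\var(f(X^{(i)}))$ despite the irregular geometry of $E_i$. The ``cube inside $E_i$'' reduction handles this; the two points requiring care are the self-similarity claim that $H$ sends length-$2^{-dk}$ dyadic intervals onto side-$2^{-k}$ cubes (which follows from the recursive definition of the HSFC and is the structural fact behind Lemma~\ref{lem:ei}), and the uniformity over the $n$ cubes of the Taylor remainder, which rests on the uniform continuity of $\nabla f$ on the compact cube $[0,1]^d$.
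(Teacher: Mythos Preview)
Your proposal is correct and follows essentially the same route as the paper: both embed a dyadic subinterval $J_i\subset I_i$ of length $2^{-dk}$ with $k=\lceil(\log_2 n+1)/d\rceil$, use that $H(J_i)$ is a genuine subcube of side $2^{-k}$, apply the conditional-variance inequality $\var(Y)\ge\mathbb{P}(A)\var(Y\mid A)$ together with the Taylor expansion of Lemma~\ref{lem:asymvar} on that subcube to obtain the lower bound \eqref{eq:lbsmooth}, and then combine this with the Lipschitz/diameter moment bound from Lemma~\ref{lem:ei} to verify the Lyapunov condition. The constants and exponent bookkeeping you carry out match the paper's exactly.
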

\begin{proof}
Let $m = \lceil\frac{\log_2n+1}{d}\rceil$. Then there exists an interval $J_i$ of the form $[(k-1)/2^{dm},k/2^{dm}]$ such that $J_i\subset I_i$. This is because $\lambda_1(I_i)\geq 2\lambda_1(J_i)$. Let $s_{n}^{2}=\sum_{i=1}^{n}\sigma_{i}^{2}$, where
$\sigma_{i}^{2}=\var(f(X^{(i)}))$. Let $\mu_i=\mathrm{E}(f(X^{(i)}))$, and let $\tilde{E}_i=H(J_i)$. Note that  $\tilde{E}_i\subset E_i$ due to  $J_i\subset I_i$. Let $\mu'_i=\tilde{\mathrm{E}}(f(X^{(i)}))$, where the expectation is taken from $X^{(i)}\sim \unif(\tilde{\mathrm{E}}_i)$. Based on some basic algebra, we find
\begin{align*}
\sigma_{i}^{2}&=\var(f(X^{(i)}))=\frac{1}{\lambda_d(E_i)}\int_{E_i}[f(X)-\mu_i]^2 \mrd X \\
&\ge \frac{1}{\lambda_d(E_i)}\int_{\tilde{E}_i}[f(X)-\mu_i]^2 \mrd X\\
&= \frac{1}{\lambda_d(E_i)}\int_{\tilde{E}_i}\left([f(X)-\mu'_i]^2+2(f(X)-\mu'_i)(\mu'_i-\mu_i)+ (\mu'_i-\mu_i)^2\right)\mrd X\\
&=\frac{\lambda_d(\tilde{E}_i)}{\lambda_d(E_i)}\left(\widetilde{\mathrm{Var}}(f(X^{(i)}))+(\mu'_i-\mu_i)^2\right)\\
&\ge 2^{-(1+d)}\widetilde{\mathrm{Var}}(f(X^{(i)})),
\end{align*}
where $\widetilde{\mathrm{Var}}$ is taken  over $X^{(i)}\sim\unif(\tilde{E}_i)$, and we used two results by applying Lemma~\ref{lem:ei} that $\lambda_d(E_i)=1/n$ and
\begin{equation}\label{eq:1}
\lambda_d(\tilde{E}_i)=\lambda_1(J_i)=\frac{1}{2^{dm}}\ge\frac{1}{ 2^{1+d}n}=2^{-(1+d)}\lambda_d(E_i).
\end{equation} 
We thus have
\begin{equation}
\frac{s_{n}^{2}}{n^{1-\frac{2}{d}}}\geq \frac{1}{2^{1+d}n^{1-\frac{2}{d}}}\sum_{i=1}^n\widetilde{\mathrm{Var}}(f(X^{(i)}))=:K_1(n).
\end{equation}

Notice that $\tilde{E}_i$ is a cube with sides of length $2^{-m}$. Following the proof of Theorem~\ref{thm:gridsCLT}, we have
\begin{equation}\label{eq:tvar}
\widetilde{\mathrm{Var}}(f(X^{(i)})) = \frac{1}{12\cdot 2^{2m}}\Vert\nabla f(c_i)\Vert^{2}+o(2^{-2m}),
\end{equation}
where $c_i$ is the center of $\tilde{E}_i$.
Therefore, 
\begin{align}
\liminf_{n\to\infty} K_1(n)&=\liminf_{n\to\infty}\frac{1}{2^{1+d}n^{1-\frac{2}{d}}}\left(\frac{1}{12\cdot 2^{2m}}\sum_{i=1}^n \Vert\nabla f(c_i)\Vert^{2}+o(2^{-2m}n)\right)\notag\\
&\geq \liminf_{n\to \infty} \frac{1}{2^{1+d}n^{1-2/d}}\frac{1}{12\cdot 2^{2(1+d)/d}n^{2/d}}\sum_{i=1}^n \Vert\nabla f(c_i)\Vert^{2}\label{eq:lb1}\\
&=\frac{1}{96\cdot 2^{2/d+d}}\int_{[0,1]^{d}}\Vert\nabla f(X)\Vert^{2}\mrd X>0.\label{eq:lb2}
\end{align}
The inequality \eqref{eq:lb1} is due to $m\le(\log_2 n+1)/d+1$. The equality \eqref{eq:lb2} is due to $c_i\in E_i$ and $\{E_1,\dots,E_n\}$ is a split of $[0,1]^d$.
As a result,
\begin{equation}\label{eq:lowerbound}
\liminf_{n\to\infty} \frac{s_{n}^{2}}{n^{1-\frac{2}{d}}}\ge \liminf_{n\to\infty} K_1(n)\ge\frac{1}{96\cdot 2^{2/d+d}}\int_{[0,1]^{d}}\Vert\nabla f(X)\Vert^{2}\mrd X>0.
\end{equation}
Combing \eqref{eq:lowerbound} with $\var(\hat{\mu}_{n})=s_n^2/n^2$ establishes the inequality \eqref{eq:lbsmooth}.

Similar to \eqref{eq:bound2pdelta}, for any $\delta>0$, there exists a constant $C_{d,\delta}$ depending on $d$ and $\delta$ such that
\begin{equation}\label{eq:upperbound}
\mathbb{E}\left[\left|f(X^{(i)})-\mathbb{E}[f(X^{(i)})]\right|^{2+\delta}\right]
\leq C_{d,\delta}n^{-\frac{2+\delta}{d}},
\end{equation}
because the diameter of $E_i$ is not larger than $2\sqrt{d+3}n^{-1/d}$ by Lemma~\ref{lem:ei}. 
Using \eqref{eq:lowerbound} and \eqref{eq:upperbound}, the Lyapunov condition
\begin{align*}
\lim_{n\rightarrow\infty}\frac{1}{s_{n}^{2+\delta}}\sum_{i=1}^{n}\mathbb{E}\left[\left|f(X^{(i)})
-\mathbb{E}[f(X^{(i)})]\right|^{2+\delta}\right]
&\leq\limsup_{n\rightarrow\infty}\frac{C_{d,\delta}n^{1-\frac{2+\delta}{d}}}{s_{n}^{2+\delta}}\\
&=\limsup_{n\rightarrow\infty}  C_{d,\delta}n^{-2/\delta}\left(\frac{n^{1-2/d}}{s_n^2}\right)^{(2+\delta)/2}=0
\end{align*}
is satisfied. Finally, using the Lyapunov CLT, we obtain \eqref{eq:HilbertCLT}.
\end{proof}

From the proof of Theorem 4 in \cite{he:owen:2016}, we find that 
\begin{equation}\label{eq:upperbd}
\var(\hat{\mu}_{n})\le 4M^2(d+3)n^{-1-2/d}
\end{equation}
for any Lipschitz function $f$ with modulus $M$. Theorem~\ref{thm:HilbertCLTsmooth} gives an asymptotic lower bound of order $n^{-1-2/d}$ for $\var(\hat{\mu}_{n})$. Therefore, the rate $O(n^{-1-2/d})$ is tight for $\var(\hat{\mu}_{n})$ if $f\in C^1([0,1]^d)$ and $\int_{[0,1]^{d}}\Vert\nabla f(X)\Vert^{2}\mrd X>0$. To prove the asymptotic normality, we only require the lower bound as shown in the proof of Theorem~\ref{thm:HilbertCLTsmooth}.

\subsection{Piecewise Smooth Functions}\label{sec:disc}
In this subsection, we focus on piecewise smooth functions of the form $f(X)=g(X)1_{\Omega}(X)$, where $\partial\Omega$ admits a $(d-1)$-dimensional Minkowski content defined below. This kind of functions was also studied in \cite{he:owen:2016}.
\begin{definition}
	For a set $\Omega\subset[0,1]^d$, define
	\begin{equation}\label{eq:fmc}
	\mathcal{M}(\partial \Omega)=\lim_{\epsilon\downarrow 0}\frac{\lambda_d((\partial \Omega)_\epsilon)}{2\epsilon},
	\end{equation}
	where $(A)_\epsilon:=\{x+y|x\in A,\Vert y\Vert\leq \epsilon\}$.
	If $\mathcal{M}(\partial \Omega)$ exists and finite, then $\partial \Omega$ is said to admit a $(d-1)$-dimensional Minkowski content.
\end{definition}

In the terminology of geometry, $\mathcal{M}(\partial \Omega)$  is known as the surface area of the set $\Omega$. The Minkowski content has a clear intuitive basis, compared to the Hausdorff measure that provides an alternative to quantify the surface area. We should note that the Minkowski content coincides with the Hausdorff measure, up to a constant factor, in regular cases.  It is known that the boundary of any convex set in $[0,1]^d$ has a $(d-1)$-dimensional Minkowski content since the surface area of a convex set  in $[0,1]^d$ is bounded by the surface area of  the unit cube $[0,1]^d$, which is $2d$. More generally, \cite{Ambr:2008} found that  $\partial \Omega$ admits a ($d-1$)-dimensional Minkowski content when $\Omega$ has a Lipschitz boundary. 

Let
\begin{align*}
&\mathcal{T}_{\mathrm{int}}=\{1\leq i\leq n|E_{i}\subset\Omega\},
\\
&\mathcal{T}_{\mathrm{bdy}}=\{1\leq i\leq n|E_{i}\cap\Omega\neq\emptyset\}
\backslash\mathcal{T}_{\mathrm{int}},
\end{align*}
be the indices of collections of $E_i$ that are interior to $\Omega$ and at the boundary of $\Omega$, respectively. Denote $|A|$ as the cardinality of the set $A$. 

\begin{lemma}\label{lem:bdy}
If $\partial\Omega$ admits a $(d-1)$-dimensional
Minkowski content, then $|\mathcal{T}_{\mathrm{bdy}}|=O(n^{1-1/d})$.
\end{lemma}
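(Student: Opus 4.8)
The plan is to show that every cube $E_i$ with $i \in \mathcal{T}_{\mathrm{bdy}}$ is contained in a thin tube around $\partial\Omega$, and then bound the number of such cubes using the volume of that tube, which the Minkowski content controls. First I would recall from Lemma~\ref{lem:ei} that each $E_i = H(I_i)$ has Lebesgue measure $\lambda_d(E_i) = 1/n$ and diameter at most $r_n := 2\sqrt{d+3}\, n^{-1/d}$. The key geometric observation is that if $i \in \mathcal{T}_{\mathrm{bdy}}$, then $E_i$ meets both $\Omega$ and its complement: indeed $E_i \cap \Omega \neq \emptyset$ by definition, and $E_i \not\subset \Omega$ (otherwise $i \in \mathcal{T}_{\mathrm{int}}$), so $E_i$ contains a point of $\Omega$ and a point outside $\Omega$. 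Since $E_i$ is connected (it is the continuous image under $H$ of the interval $I_i$), it must meet $\partial\Omega$. Hence every point of $E_i$ lies within distance $r_n$ of $\partial\Omega$, i.e. $E_i \subset (\partial\Omega)_{r_n}$.

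Next I would assemble the counting estimate. The cubes $\{E_i\}_{i=1}^n$ are disjoint (up to measure zero) with $\lambda_d(E_i) = 1/n$, so
\begin{equation*}
\frac{|\mathcal{T}_{\mathrm{bdy}}|}{n} = \sum_{i \in \mathcal{T}_{\mathrm{bdy}}} \lambda_d(E_i) = \lambda_d\Bigl(\bigcup_{i \in \mathcal{T}_{\mathrm{bdy}}} E_i\Bigr) \le \lambda_d\bigl((\partial\Omega)_{r_n}\bigr).
\end{equation*}
By the definition of the $(d-1)$-dimensional Minkowski content, $\lambda_d((\partial\Omega)_\epsilon)/(2\epsilon) \to \mathcal{M}(\partial\Omega) < \infty$ as $\epsilon \downarrow 0$, so there is a constant $C$ with $\lambda_d((\partial\Omega)_\epsilon) \le C\epsilon$ for all sufficiently small $\epsilon$; applying this with $\epsilon = r_n$ (which tends to $0$) gives $\lambda_d((\partial\Omega)_{r_n}) \le C r_n = 2C\sqrt{d+3}\, n^{-1/d}$. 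Combining, $|\mathcal{T}_{\mathrm{bdy}}| \le 2C\sqrt{d+3}\, n^{1-1/d}$, which is the claim.

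The only point needing a little care — and what I would flag as the main obstacle — is the connectedness argument: one must be sure that $E_i = H(I_i)$ is genuinely a connected set and that "meets $\Omega$ and meets the complement of $\Omega$" forces a boundary point. Connectedness is immediate since $H$ is continuous and $I_i$ is an interval. For the boundary point, if a connected set $E_i$ contained no point of $\partial\Omega$, then $E_i$ would be partitioned into the two relatively open sets $E_i \cap \Omega^{\circ}$ and $E_i \cap (\overline{\Omega})^c$; one should note $E_i \cap \Omega \neq \emptyset$ together with $E_i$ avoiding $\partial\Omega$ forces $E_i$ to meet $\Omega^{\circ}$, and $E_i \not\subset \Omega$ forces it to meet $(\overline{\Omega})^c$, so both pieces are nonempty, contradicting connectedness. (A harmless subtlety: "$E_i \not\subset \Omega$" should be read as "not contained in $\Omega$ up to the convention used to define $\mathcal{T}_{\mathrm{int}}$"; in any case a cube of positive measure meeting $\Omega$ but not contained in it meets $\partial\Omega$.) Everything else is a routine volume packing estimate, and the factor $2\sqrt{d+3}$ from Lemma~\ref{lem:ei} just rides along into the implied constant.
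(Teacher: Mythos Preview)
Your proof is correct and follows essentially the same approach as the paper: bound the diameter of each $E_i$ via Lemma~\ref{lem:ei}, show that the boundary pieces lie in the tube $(\partial\Omega)_{r_n}$, and compare volumes using the Minkowski content to get $|\mathcal{T}_{\mathrm{bdy}}|=O(n^{1-1/d})$. The only difference is that you spell out the connectedness argument for why $E_i$ must meet $\partial\Omega$, whereas the paper simply asserts $\cup_{i\in\mathcal{T}_{\mathrm{bdy}}}E_i\subset(\partial\Omega)_\epsilon$ without justification.
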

\begin{proof}
The proof is given in the proof of Theorem 4 in \cite{he:owen:2016}. We provide here for completeness.

From Lemma~\ref{lem:ei}, the diameter of $E_i$, denoted by $r_i$, satisfies $r_i\leq 2\sqrt{d+3}n^{-1/d}$. Let $\epsilon=2\sqrt{d+3}n^{-1/d}$. 
From~\eqref{eq:fmc}, for any fixed $\delta>2\mathcal{M}(\partial \Omega)$, there exists $\epsilon_0>0$ such that $\lambda_d((\partial \Omega)_\epsilon)<\delta \epsilon$ whenever $\epsilon<\epsilon_0$. Assume that $n>(2\sqrt{d+3}/\epsilon_0)^d$.  Thus $r_i\le \epsilon<\epsilon_0$. Note that $\cup_{i\in \mathcal{T}_{\mathrm{bdy}}}E_i\subset (\partial \Omega)_\epsilon$. This leads to
$$|\mathcal{T}_{\mathrm{bdy}}|\leq \frac{\lambda_d((\partial \Omega)_\epsilon)}{\lambda_d(E_i)}\leq \frac{\delta\epsilon}{n^{-1}}=2\sqrt{d+3}\delta n^{1-1/d},$$
which completes the proof.
\end{proof}

\begin{theorem}\label{thm:disCLT}
Let $f(X)=g(X)1_{\Omega}(X)$, where
$g(X)\in C^1([0,1]^d)$, $\Omega\subset[0,1]^d$ and $\partial\Omega$ admits a $(d-1)$-dimensional
Minkowski content. Suppose that $\sigma_\Omega^2=\int_{\Omega}\Vert\nabla g(X)\Vert^{2}\mrd X>0$. Then for all sufficiently large $n$,
\begin{equation}\label{eq:lbdis}
\var(\hat{\mu}_{n})\geq \frac{\sigma_\Omega^2}{96}2^{-2/d-d}n^{-1-2/d},
\end{equation}
If $d>2$,
\begin{equation}\label{eq:disHilbertCLT}
\frac{\hat{\mu}_{n}-\mu}{\sqrt{\var(\hat{\mu}_{n})}}\to N(0,1),
\end{equation}
in distribution as $n=b^m\to\infty$.
\end{theorem}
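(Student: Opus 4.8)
The plan is to decompose $\hat{\mu}_n$ into a smooth part and a boundary part, prove asymptotic normality for the smooth part via the lower bound and Lyapunov condition already in hand, and show the boundary part is asymptotically negligible when $d>2$.

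\textbf{Step 1: Lower bound.} The lower bound \eqref{eq:lbdis} is proved exactly as in Theorem~\ref{thm:HilbertCLTsmooth}: for each $i\in\mathcal{T}_{\mathrm{int}}$, the cube $E_i$ lies entirely inside $\Omega$, so $f=g$ on a sub-cube $\tilde E_i\subset E_i$ of side $2^{-m}$, and the same chain of inequalities gives $\sigma_i^2\ge 2^{-(1+d)}\widetilde{\var}(g(X^{(i)}))$. Summing over $i\in\mathcal{T}_{\mathrm{int}}$ only, using $\widetilde{\var}(g(X^{(i)}))=\tfrac{1}{12\cdot 2^{2m}}\norm{\nabla g(c_i)}^2+o(2^{-2m})$ and the fact that $|\mathcal{T}_{\mathrm{bdy}}|=O(n^{1-1/d})=o(n)$ by Lemma~\ref{lem:bdy} (so the interior cubes fill up $\Omega$ in the limiting Riemann sum), one recovers $\liminf_{n\to\infty} s_n^2/n^{1-2/d}\ge \tfrac{\sigma_\Omega^2}{96\cdot 2^{2/d+d}}>0$, hence \eqref{eq:lbdis}.

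\textbf{Step 2: Splitting and the boundary contribution.} Write $\hat\mu_n-\mu = A_n + B_n$ where $A_n$ collects the centered summands $\tfrac1n(f(X^{(i)})-\mathrm{E} f(X^{(i)}))$ over $i\in\mathcal{T}_{\mathrm{int}}$ and $B_n$ collects those over $i\in\mathcal{T}_{\mathrm{bdy}}$ (terms with $E_i\cap\Omega=\emptyset$ contribute $0$). Because $f$ is bounded (by $\sup|g|=:M$) on $[0,1]^d$, each boundary summand is bounded by $2M/n$, so $\var(B_n)=\tfrac1{n^2}\sum_{i\in\mathcal{T}_{\mathrm{bdy}}}\var(f(X^{(i)}))\le \tfrac{M^2}{n^2}|\mathcal{T}_{\mathrm{bdy}}| = O(n^{-1-1/d})$ by Lemma~\ref{lem:bdy}. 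Comparing with the lower bound $\var(\hat\mu_n)\ge c\,n^{-1-2/d}$ from Step~1, we get $\var(B_n)/\var(\hat\mu_n) = O(n^{-1/d+2/d}) = O(n^{1/d-\,2/d})$ — wait, more carefully, $O(n^{-1-1/d})/O(n^{-1-2/d}) = O(n^{-1/d+2/d})=O(n^{1/d})$, which does \emph{not} vanish, so one must be sharper: since the boundary cubes are genuinely at the discontinuity, use instead the bound from \cite{he:owen:2016} that $\var(f(X^{(i)}))$ for $i\in\mathcal{T}_{\mathrm{bdy}}$ is only $O(1)$ but there are $O(n^{1-1/d})$ of them giving $\var(B_n)=O(n^{-1-1/d})$, and note the \emph{correct} comparison needs $\var(B_n)=o(\var(A_n))$ with $\var(A_n)\asymp n^{-1-2/d}$; this forces $n^{-1-1/d}=o(n^{-1-2/d})$, i.e. $-1-1/d<-1-2/d$, which is false. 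Hence the real argument must exploit that $B_n/\sqrt{\var(\hat\mu_n)}\to 0$ requires $\var(B_n)=o(n^{-1-2/d})$, so in fact one needs the finer estimate $|\mathcal{T}_{\mathrm{bdy}}|\cdot(\text{osc of }f\text{ on }E_i)^2 = o(n^{1-2/d})$; since $\mathrm{osc}$ of $f$ on a boundary cube is $O(1)$ while $|\mathcal{T}_{\mathrm{bdy}}|=O(n^{1-1/d})$, we get $\var(B_n)=O(n^{-1-1/d})$ and we need $n^{-1-1/d}=o(n^{-1-2/d})\Leftrightarrow 1/d>2/d$, impossible. So the decomposition must instead be justified by showing $\var(B_n)=O(n^{-1-1/d})$ and $\var(\hat\mu_n)$ is \emph{bounded below by a constant times} $n^{-1-2/d}$ is not enough; one needs $d>2$ to enter through comparing $n^{-1-1/d}$ with $n^{-1}$, i.e. realize $\var(A_n)$ itself may be as large as $n^{-1-2/d}$ only, and the statement as given does require the finer control $\var(B_n)/s_n^2 n^{-2}\to 0$. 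The resolution is: bound the \emph{third} absolute moment of the boundary terms and feed everything into one global Lyapunov check rather than splitting.

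\textbf{Step 3: Global Lyapunov CLT.} Rather than separating $A_n$ and $B_n$, apply the Lyapunov CLT directly to all $n$ summands $Y_i=f(X^{(i)})-\mathrm{E} f(X^{(i)})$. For interior indices, $\mathrm{E}|Y_i|^{2+\delta}\le C_{d,\delta}n^{-(2+\delta)/d}$ as in \eqref{eq:upperbound}; for the $O(n^{1-1/d})$ boundary indices only the crude bound $\mathrm{E}|Y_i|^{2+\delta}\le (2M)^{2+\delta}$ is available. Hence
\begin{equation*}
\frac{1}{s_n^{2+\delta}}\sum_{i=1}^n\mathrm{E}|Y_i|^{2+\delta}\le \frac{C_{d,\delta}\,n^{1-(2+\delta)/d} + (2M)^{2+\delta}\,O(n^{1-1/d})}{s_n^{2+\delta}}.
\end{equation*}
Using $s_n^2\ge c\,n^{1-2/d}$ from Step~1, the first term is $O(n^{-\delta/2})\to0$, and the second term is $O\!\big(n^{1-1/d}/n^{(1-2/d)(2+\delta)/2}\big)=O\!\big(n^{1-1/d-(1-2/d)(1+\delta/2)}\big)$. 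The exponent is $1-1/d-(1+\delta/2)+(2/d)(1+\delta/2)= -\delta/2 + 1/d + \delta/d$. For this to be negative for some $\delta>0$ we need $1/d+\delta/d<\delta/2$, i.e. $1/d<\delta(1/2-1/d)=\delta(d-2)/(2d)$, which is solvable in $\delta>0$ precisely when $d>2$ (then take $\delta>2/(d-2)$). This is exactly where the hypothesis $d>2$ is used. With such a $\delta$ the Lyapunov condition holds, and since $s_n^2 n^{-2}=\var(\hat\mu_n)$, the Lyapunov CLT yields \eqref{eq:disHilbertCLT}.

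\textbf{Main obstacle.} The crux is controlling the boundary terms: they carry $O(1)$ oscillation over $O(n^{1-1/d})$ cubes, so their contribution to the $(2+\delta)$-th absolute moment sum competes with $s_n^{2+\delta}\asymp n^{(1-2/d)(1+\delta/2)}$. Balancing these exponents is what pins down the restriction $d>2$ and dictates choosing $\delta$ large (unlike the smooth case where any $\delta>0$ works). Everything else — the lower bound and the interior moment estimate — is a direct replay of Theorems~\ref{thm:gridsCLT} and \ref{thm:HilbertCLTsmooth} together with Lemma~\ref{lem:bdy}.
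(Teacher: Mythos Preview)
Your final argument (Steps~1 and~3) is correct and is exactly the paper's proof: lower-bound $s_n^2$ via the interior cubes only, then verify a single global Lyapunov condition by splitting the $(2+\delta)$-th moment sum into interior and boundary parts and choosing $\delta>2/(d-2)$. Step~2 and your opening plan are a false start you rightly abandon --- the boundary contribution to the \emph{variance} can indeed dominate, so proving $B_n$ negligible is hopeless, but Lyapunov does not need that: it needs only a lower bound on the total $s_n^2$, which the interior cubes already supply.
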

\begin{proof}
Following the notations in the proof of Theorem~\ref{thm:HilbertCLTsmooth},
we have
\begin{equation}\label{eq:decom}
s_{n}^{2}=\sum_{i=1}^{n}\sigma_{i}^{2}
=\sum_{i\in\mathcal{T}_{\mathrm{int}}}\var(g(X^{(i)}))
+\sum_{i\in\mathcal{T}_{\mathrm{bdy}}}\var(g(X^{(i)})1_{\Omega}(X^{(i)})).
\end{equation}
Similar to the proof of Theorem~\ref{thm:HilbertCLTsmooth} for $g\in C^1([0,1]^d)$, we have
\begin{equation*}
\var(g(X^{(i)})) = \frac{1}{12\cdot 2^{2m}}\Vert\nabla g(c_i)\Vert^{2}+o(2^{-2m}),
\end{equation*}
where $c_i\in E_i$ as defined there for $i\in \mathcal{T}_{\mathrm{int}}$.
From \eqref{eq:decom}, we find that 
\begin{equation}
s_{n}^{2}\geq\sum_{i\in\mathcal{T}_{\mathrm{int}}}\var(g(X^{(i)}))=  \frac{1}{12\cdot 2^{2m}}\sum_{i\in\mathcal{T}_{\mathrm{int}}}\Vert\nabla g(c_i)\Vert^{2}+o(2^{-2m}\vert\mathcal{T}_{\mathrm{int}}\vert).
\end{equation}
Note that
\begin{align}
\int_{\Omega}\Vert\nabla g(X)\Vert^{2}\mrd X&=\lim_{n\to\infty}\frac1n\sum_{i=1}^{n}\Vert\nabla g(c_i)\Vert^{2}1_\Omega(c_i)\notag\\&=\lim_{n\to\infty}\frac1n\left(\sum_{i\in\mathcal{T}_{\mathrm{int}}}\Vert\nabla g(c_i)\Vert^{2}+\sum_{i\in\mathcal{T}_{\mathrm{bdy}}}\Vert\nabla g(c_i)\Vert^{2}1_\Omega(c_i)\right)\label{eq:nonbdy}\\
&=\lim_{n\to\infty}\frac1n\sum_{i\in\mathcal{T}_{\mathrm{int}}}\Vert\nabla g(c_i)\Vert^{2},\notag
\end{align}
where we picked $c_i\in E_i\backslash\Omega\neq\emptyset$ for $i\in\mathcal{T}_{\mathrm{bdy}}$ so that the last term of \eqref{eq:nonbdy} is actually zero.
Therefore, similar to \eqref{eq:lowerbound}, we have
\begin{equation}\label{eq:lowerbound2}
\liminf_{n\to\infty} \frac{s_{n}^{2}}{n^{1-\frac{2}{d}}}\ge\frac{1}{96\cdot 2^{2/d+d}}\int_{\Omega}\Vert\nabla g(X)\Vert^{2}\mrd X>0,
\end{equation}
that establishes \eqref{eq:lbdis}.

On the other hand, 
\begin{align*}
&\sum_{i=1}^{n}\mathbb{E}\left[\left|f(X^{(i)})-\mathbb{E}[f(X^{(i)})]\right|^{2+\delta}\right]
\\
&=\sum_{i\in\mathcal{T}_{\mathrm{int}}}\mathbb{E}\left[\left|g(X^{(i)})-\mathbb{E}[g(X^{(i)})]\right|^{2+\delta}\right]
+\sum_{i\in\mathcal{T}_{\mathrm{bdy}}}\mathbb{E}\left[\left|g(X^{(i)})1_{\Omega}(X^{(i)})-\mathbb{E}[g(X^{(i)})1_{\Omega}(X^{(i)})]\right|^{2+\delta}\right].
\end{align*}
Again, for any $\delta>0$,
\begin{equation*}
\mathbb{E}[\vert g(X^{(i)})-\mathbb{E}[g(X^{(i)})]\vert^{2+\delta}]
\leq C_{d,\delta}n^{-\frac{2+\delta}{d}},
\end{equation*}
where $C_{d,\delta}>0$ is some constant that only depends on $d$ and $\delta$. 
This leads to
\begin{equation} 
\sum_{i\in\mathcal{T}_{\mathrm{int}}}\mathbb{E}[|g(X^{(i)})-\mathbb{E}[g(X^{(i)})]|^{2+\delta}]\leq 
C_{d,\delta}n^{1-\frac{2+\delta}{d}}.
\end{equation}
It follows from \eqref{eq:lowerbound2} that
\begin{equation}\label{eq:cond1}
\limsup_{n\rightarrow\infty}\frac{1}{s_{n}^{2+\delta}}
\sum_{i\in\mathcal{T}_{\mathrm{int}}}\mathbb{E}\left[\left|g(U_{i})-\mathbb{E}[g(U_{i})]\right|^{2+\delta}\right]
\le \limsup_{n\rightarrow\infty}\frac{C_{d,\delta}n^{1-\frac{2+\delta}{d}}}{s_{n}^{2+\delta}}=0.
\end{equation}
By the continuity of $g$, there is a constant $D$
with $|g(X)|\leq D$ for all $X\in[0,1]^{d}$. 
Therefore,
\begin{equation}
\sum_{i\in\mathcal{T}_{\mathrm{bdy}}}\mathbb{E}\left[\left|g(X^{(i)})1_{\Omega}(X^{(i)})-\mathbb{E}[g(X^{(i)})1_{\Omega}(X^{(i)})]\right|^{2+\delta}\right]
\leq (2D)^{2+\delta}|\mathcal{T}_{\mathrm{bdy}}|.
\end{equation}
By Lemma~\ref{lem:bdy}, we have $|\mathcal{T}_{\mathrm{bdy}}|=O(n^{1-1/d})$. As a result,
\begin{align}
&\limsup_{n\rightarrow\infty}\frac{1}{s_{n}^{2+\delta}}
\sum_{i\in\mathcal{T}_{\mathrm{bdy}}}\mathbb{E}\left[\left|g(X^{(i)})1_{\Omega}(X^{(i)})-\mathbb{E}[g(X^{(i)})1_{\Omega}(X^{(i)})]\right|^{2+\delta}\right]
\\
&\leq\limsup_{n\rightarrow\infty}
\frac{(2D)^{2+\delta}|\mathcal{T}_{\mathrm{bdy}}|}{s_{n}^{2+\delta}}
\notag\\
&=\limsup_{n\rightarrow\infty}(2D)^{2+\delta}n^{\frac{1+\delta}d-\frac\delta 2}\left(\frac{n^{1-\frac 2d}}{s_{n}^2}\right)^{\frac{2+\delta}2}\frac{|\mathcal{T}_{\mathrm{bdy}}|}{n^{1-1/d}}=0,\label{eq:cond2}
\end{align}
provided that $d>2$ and $\delta>1$. 
Together with \eqref{eq:cond1} and \eqref{eq:cond2}, the Lyapunov condition is thus verified. So the asymptotic normality is satisfied by applying the Lyapunov CLT again.
\end{proof}

\cite{he:owen:2016} gave an upper bound of $O(n^{-1-1/d})$ for $\var(\hat{\mu}_{n})$ if $f(X)=g(X)1_{\Omega}(X)$, where $g$ is Lipschitz continuous. Theorem~\ref{thm:disCLT} provides a lower bound of order $n^{-1-2/d}$. For discontinuous integrands, we cannot get asymptotically
matching lower bound to the upper bound because when we take $\Omega=[0,1]^d$, the lower bound \eqref{eq:lbdis} is in line with the smooth case.  To establish the asymptotic normality, Theorem~\ref{thm:disCLT} requires $d>2$.  It is not clear in general whether the  asymptotic normality holds for $d=1,2$. If the last term of  \eqref{eq:decom} has a lower bound of $O(n^{1-1/d})$, one would have the asymptotic normality  for $d=2$. For $d=1$, let's consider the function $f(X)=g(X)1_{\{X>\theta\}}(X)$ for some $\theta \in [0,1]$. If $\theta$ is a multiple of $b^{-m_0}$ for some $m_0>0$, then the error over the set $\mathcal{T}_{\mathrm{bdy}}$ vanishes whenever $m\ge m_0$.  The Lyapunov condition is thus verified by \eqref{eq:cond1}. As a result, the asymptotic normality holds for this case. If $\theta$ does not have a terminating $b$-adic representation, we may require some additional conditions to ensure the asymptotic normality. Theorem~\ref{thm:disCLT} also requires that $\int_{\Omega}\Vert\nabla g(X)\Vert^{2}\mrd X>0$. That condition actually rules out the case in which $f$ is an indicator function.
The analysis of indicator functions is presented in the next subsection.

\subsection{Indicator Functions}\label{sec:ind}
We now consider indicator functions of the form $f(X)=1_{\Omega}(X)$. Recall that $\mathcal{T}_{\mathrm{bdy}}$ denotes the index of the collections of $E_i$ that touch the boundary of $\Omega$. In this case, the variance of the estimate reduces to 
\begin{equation}\label{eq:indvar}
\var(\hat{\mu}_n) = \frac{1}{n^2}\sum_{i\in \mathcal{T}_{\mathrm{bdy}}}\var(1_{\Omega}(X^{(i)})),
\end{equation}
where $\var(1_{\Omega}(X^{(i)}))=n\lambda_d(E_{i}\cap\Omega)(1-n\lambda_d(E_{i}\cap\Omega))$.
Motivated by the proof of Theorem~\ref{thm:disCLT}, one needs to derive a suitable lower bound for $s_n^2=n^2\var(\hat{\mu}_n)$ to apply the Lyapunov CLT. Note that $s_n^2\leq |\mathcal{T}_{\mathrm{bdy}}|/4$. Assume that $\partial\Omega$ admits a $(d-1)$-dimensional
Minkowski content;  $s_n^2$ then has an upper bound of $O(n^{1-1/d})$ since $|\mathcal{T}_{\mathrm{bdy}}|=O(n^{1-1/d})$ by Lemma~\ref{lem:bdy}. It is easy to see that if $s_n^2\ge c n^{1-1/d}$ for some constant $c>0$, the Lyapunov condition is satisfied for any $d>1$. However,  it is possible that $\mathcal{T}_{\mathrm{bdy}}=\emptyset$ for strictly increasing sample sizes $n_k$, $k=1,\dots,\infty$, if $\Omega$ is a cube. This leads to an identically zero variance and hence $s_{n_k}^2=0$.
Therefore, to study the asymptotic normality for indicator functions, we need the following assumption on $\Omega$, instead of the Minkowski content condition.

\begin{assumption}\label{assump:ind}
For $\Omega\subset [0,1]^d$, there exist a constant $c>0$ and an $N_0\ge 1$ such that for any $n\geq N_0$,
\begin{equation}\label{eq:asump:ind1}
\inf_{i\in\mathcal{T}_{\mathrm{bdy}}}\var(1_{\Omega}(X^{(i)}))\ge c.
\end{equation}
Moreover, 
\begin{equation}\label{eq:asump:ind2}
\lim_{n\rightarrow\infty}|\mathcal{T}_{\mathrm{bdy}}|=\infty.
\end{equation}

\end{assumption}
\begin{theorem}\label{thm:indCLT}
	Let $f(X)=1_{\Omega}(X)$, where $\Omega$ satisfies Assumption~\ref{assump:ind}. Then
	\begin{equation}\label{eq:indHilbertCLT}
	\frac{\hat{\mu}_{n}-\mu}{\sqrt{\var(\hat{\mu}_{n})}}\to N(0,1),
	\end{equation}
	in distribution as $n=b^m\to\infty$.
\end{theorem}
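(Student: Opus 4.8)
The plan is to apply the Lyapunov CLT to the triangular array $\{f(X^{(i)})\}_{i=1}^n$, exactly as in the proofs of Theorems~\ref{thm:gridsCLT} and~\ref{thm:HilbertCLTsmooth}, but exploiting that $f=1_\Omega$ is bounded so as to sidestep any need for a polynomial-rate lower bound on $\var(\hat{\mu}_n)$. Write $\mu_i=\mathbb{E}[1_\Omega(X^{(i)})]=n\lambda_d(E_i\cap\Omega)$, $\sigma_i^2=\var(1_\Omega(X^{(i)}))=\mu_i(1-\mu_i)$, and $s_n^2=\sum_{i=1}^n\sigma_i^2=n^2\var(\hat{\mu}_n)$. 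Since $1_\Omega$ is almost surely constant on every $E_i$ with $i\notin\mathcal{T}_{\mathrm{bdy}}$ (such an $E_i$ lies entirely inside or entirely outside $\Omega$), we have $\sigma_i^2=0$ there, so $s_n^2=\sum_{i\in\mathcal{T}_{\mathrm{bdy}}}\sigma_i^2$, and, as in the earlier proofs,
$$\frac{\hat{\mu}_n-\mu}{\sqrt{\var(\hat{\mu}_n)}}=\frac{1}{s_n}\sum_{i=1}^n\bigl(1_\Omega(X^{(i)})-\mu_i\bigr).$$
Hence it suffices to verify the Lyapunov condition $s_n^{-(2+\delta)}\sum_{i=1}^n\mathbb{E}\bigl[|1_\Omega(X^{(i)})-\mu_i|^{2+\delta}\bigr]\to 0$ for some $\delta>0$.

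Next I would use boundedness to collapse the Lyapunov ratio. Because $|1_\Omega(X^{(i)})-\mu_i|\le 1$, for every $\delta>0$ we have $\mathbb{E}\bigl[|1_\Omega(X^{(i)})-\mu_i|^{2+\delta}\bigr]\le\mathbb{E}\bigl[|1_\Omega(X^{(i)})-\mu_i|^{2}\bigr]=\sigma_i^2$, so $\sum_{i=1}^n\mathbb{E}\bigl[|1_\Omega(X^{(i)})-\mu_i|^{2+\delta}\bigr]\le s_n^2$ and the Lyapunov ratio is at most $s_n^{2}/s_n^{2+\delta}=s_n^{-\delta}$. Thus the whole theorem reduces to the single claim that $s_n^2\to\infty$.

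Finally, $s_n^2\to\infty$ is precisely what Assumption~\ref{assump:ind} delivers. For $n\ge N_0$, clause~\eqref{eq:asump:ind1} gives $s_n^2=\sum_{i\in\mathcal{T}_{\mathrm{bdy}}}\sigma_i^2\ge c\,|\mathcal{T}_{\mathrm{bdy}}|$, and clause~\eqref{eq:asump:ind2} forces $|\mathcal{T}_{\mathrm{bdy}}|\to\infty$, so $s_n^2\to\infty$; in particular $s_n^2>0$ for all large $n$, so the ratio above is well defined along the sequence $n=b^m\to\infty$. Picking any $\delta>0$ (say $\delta=1$) then makes $s_n^{-\delta}\to 0$, the Lyapunov condition holds, and the Lyapunov CLT yields \eqref{eq:indHilbertCLT}.

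Rather than an analytic obstacle, the point to be careful about is conceptual: for indicator integrands there is no fixed-order lower bound on $\var(\hat{\mu}_n)$ — it may even vanish along a subsequence when $\Omega$ is a box — so the argument cannot be run as in Theorem~\ref{thm:disCLT} through a rate such as $n^{1-1/d}$, and one must work directly with $s_n^2\to\infty$. Assumption~\ref{assump:ind} is the minimal replacement that keeps $s_n^2$ from stalling (clause~\eqref{eq:asump:ind1}) and from being empty (clause~\eqref{eq:asump:ind2}); the only other step needing a line of justification is that the indices $i\notin\mathcal{T}_{\mathrm{bdy}}$ contribute nothing, which is immediate from the partition structure.
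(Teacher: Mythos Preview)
Your proof is correct and follows essentially the same route as the paper: verify the Lyapunov condition via the lower bound $s_n^2\ge c\,|\mathcal{T}_{\mathrm{bdy}}|$ from Assumption~\ref{assump:ind} together with the boundedness of the indicator. The only cosmetic difference is that you bound the $(2+\delta)$th central moment by $\sigma_i^2$ (using $|1_\Omega-\mu_i|\le 1$), so the Lyapunov ratio is at most $s_n^{-\delta}$, whereas the paper bounds the numerator more crudely by $2^{2+\delta}|\mathcal{T}_{\mathrm{bdy}}|$ and the ratio by $(2/\sqrt{c})^{2+\delta}|\mathcal{T}_{\mathrm{bdy}}|^{-\delta/2}$; both tend to zero under~\eqref{eq:asump:ind2}.
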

\begin{proof}
By \eqref{eq:indvar} and \eqref{eq:asump:ind1}, we have $s_n^2\ge c |\mathcal{T}_{\mathrm{bdy}}|$. The Lyapunov condition
\begin{align*}
\lim_{n\rightarrow\infty}\frac{1}{s_{n}^{2+\delta}}\sum_{i=1}^{n}\mathbb{E}[\vert f(X^{(i)})-\mathbb{E}[f(X^{(i)})]\vert^{2+\delta}]
&\leq\limsup_{n\rightarrow\infty}\frac{2^{2+\delta}|\mathcal{T}_{\mathrm{bdy}}|}{(c |\mathcal{T}_{\mathrm{bdy}}|)^{(2+\delta)/2}}\\
&=\limsup_{n\rightarrow\infty}  \left(\frac{2}{\sqrt{c}}\right)^{2+\delta}|\mathcal{T}_{\mathrm{bdy}}|^{-\delta/2}=0
\end{align*}
is satisfied for any $\delta>0$, where we used the condition \eqref{eq:asump:ind2} and $c>0$. 
Applying the Lyapunov CLT, we obtain \eqref{eq:indHilbertCLT}.
\end{proof}

Note that for $d=1$, the condition \eqref{eq:asump:ind2} does not hold if $\Omega$ is a  union of $k$ disjoint intervals in [0,1], where $k$ is a given positive integer. This is because $|\mathcal{T}_{\mathrm{bdy}}|\le 2k$ for all possible $n$. Actually, for such cases, the CLT does not hold since the integration error  is distributed over (at most) $k+1$ possible values for any $n=b^m$; see also \cite{lecu:2010} for discussions on randomly-shifted lattice rules.

Define $A_n(c)=\{i\in \mathcal{T}_{\mathrm{bdy}}|\var(1_{\Omega}(X^{(i)}))\ge c\}$.
Assumption~\ref{assump:ind} can be weakened slightly to that
there exist $c>0$ and $\delta>0$ such that
$$\limsup_{n\rightarrow\infty}  \frac{|\mathcal{T}_{\mathrm{bdy}}|}{|A_n(c)|^{1+\delta}}=0.$$ If  $\partial\Omega$ admits a $(d-1)$-dimensional
Minkowski content additionally, it suffices to verify
$$\limsup_{n\rightarrow\infty}  n^{1-1/d}|A_n(c)|^{-1-\delta}=0,$$
or equivalently, $|A_n(c)|^{-1}=o(n^{(1/d-1)/(1+\delta)})$.
This is because $|\mathcal{T}_{\mathrm{bdy}}|=O(n^{1-1/d})$.

However, it may be hard to verify Assumption~\ref{assump:ind} for general $\Omega$. As an illustrative example, we next show that the assumption holds for the case  $\Omega=\{X=(X_1,X_2)\in[0,1]^2|X_1+X_2\ge 1\}$. We restrict our attention to the van der Corput sequence in base $b=2$ so that $n=2^m$. In this case, $E_i$ is a square  with sides of length  $1/\sqrt{n}$ when $m$ is even; when $m$ is odd, $E_i$ is a rectangle with width $\sqrt{2/n}$ and height $1/\sqrt{2n}$; see Figure~\ref{fig:grids} for illustrations. We thus have
\begin{equation*}
|\mathcal{T}_{\mathrm{bdy}}|=
\begin{cases}
\sqrt{n},&m \text{ is even},\\
\sqrt{2n},&m \text{ is odd}.
\end{cases}
\end{equation*}
Moreover, for all $i\in \mathcal{T}_{\mathrm{bdy}}$, we find that
\begin{equation*}
\var(1_{\Omega}(X^{(i)}))=
\begin{cases}
1/4,&m \text{ is even},\\
3/16,&m \text{ is odd}.
\end{cases}
\end{equation*}
Therefore, Assumption~\ref{assump:ind} is satisfied with $c=3/16$ and $N_0=1$ so that the CLT holds for this example.
Similarly, it is easy to see that the CLT still holds for the set $\Omega=\{X=(X_1,\dots,X_d)\in[0,1]^d|\sum_{i=1}^dX_i\ge d/2\}$ in $d$ dimensions.

\begin{figure}[ht]
	\includegraphics[width = \hsize]{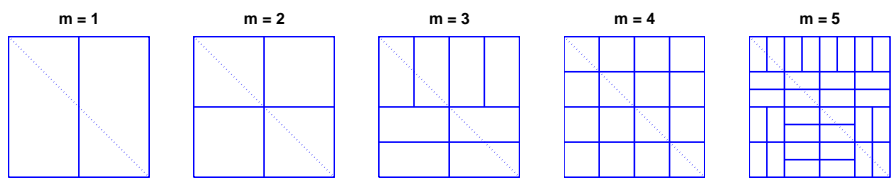}
	\caption{Five splits of $[0,1]^2$ for HSFC stratification, and the dot line is $X_1+X_2=1$.}\label{fig:grids}
\end{figure}

\section{Numerical Results}\label{sec:numer}
In this section, we present some numerical studies to assess the normality of the standardized errors. 
We also examine the lower bound established in Theorem~\ref{thm:HilbertCLTsmooth}  for smooth functions. We consider the integrals of the following functions:
\begin{itemize}
	\item a smooth function, $f_1(X)=12^{d/2}\prod_{i=1}^d(X_i-\frac{1}{2})$,
	\item a piecewise smooth function, $f_2(X)=(X_1-X_2)1_{\{\sum_{i=1}^d X_i\ge d/2\}}(X)$, and
	\item an indicator function, $f_3(X)=1_{\{\sum_{i=1}^d X_i\ge d/2\}}(X)$.
\end{itemize}
Note that for any $d\ge 1$, the exact values of these integrals are $\mu = 0$, $\mu = 0$, and $\mu = 1/2$, respectively. The smooth function was studied in \cite{owen:1997}, which satisfies the smooth condition required in \cite{loh:2003}. The scrambled $(t,m,d)$-net integration of this smooth function has a variance of $O(n^{-3}(\log n)^{d-1})$ \cite{owen:1997}, and it enjoys the asymptotic normality when $t=0$, as confirmed by \cite{loh:2003}.  However, for the last two discontinuous functions, there is no theoretical guarantee in supporting the asymptotic normality for scrambled net quadratures, since the functions do not fit into the class of smooth functions required in \cite{loh:2003}.

We make comparisons with randomized Sobol' points, which use the nested uniform scrambling of \cite{owen:1995} or the linear scrambling of \cite{mato:1998}. We use the C++ library of T. Kollig and A. Keller (\url{http://www.uni-kl.de/AG-Heinrich/Sample
Pack.html}) to generate the nested uniform scrambled Sobol' (NUS--Sobol') points. To generate the linear scrambled Sobol' (LS--Sobol') points, we make use of the generator \verb|scramble| in MATLAB. To calculate Hilbert's mapping function $H(x)$, we use the C++ source code in \cite{lawder:2000}
which is based on the algorithm in \cite{butz:1971}.
To  estimate the variances of these estimators, we use $R$ independent replications $\hat{\mu}_{n}^{(1)},\dots,\hat{\mu}_{n}^{(R)}$ of the sampling schemes. We then estimate the variances by the corresponding empirical variances
\begin{equation*}
\hat{\sigma}^2_n = \frac 1{R-1}\sum_{i=1}^R (\hat{\mu}_{n}^{(i)}-\bar{\mu})^2,
\end{equation*}
where $\bar{\mu} = (1/R) \sum_{i=1}^R \hat{\mu}_n^{(i)}$.
To see asymptotic normality, we plot the kernel smoothed density of the standardized errors
$$Z_i=\frac{\hat{\mu}_n^{(i)}-\mu}{\hat{\sigma}_n},\  i=1,\dots,R,$$
using the function \verb|ksdensity|  in MATLAB. In  our  experiments, we take $R=1000$ and $n=2^{14}=16384$, in order to get good accuracy in the estimation of the target density.

Now consider the smooth function $f_1(X)$. We find that $f_1\in C^1([0,1]^d)$, and
\begin{align*}
\int_{[0,1]^{d}}\Vert\nabla f_1(X)\Vert^{2}\mrd X&=d\int_{[0,1]^{d}}\left(\frac{\partial f(X)}{\partial X_1}\right)^{2}\mrd X\\
&=12^{d}d\int_{[0,1]^{d-1}}\prod_{i=2}^d\left(X_i-\frac{1}{2}\right)^2\mrd X\\
&=12d>0.
\end{align*} 
Therefore, by Theorem~\ref{thm:HilbertCLTsmooth}, the HSFC-based estimate follows the CLT for $f_1$. The lower bound in \eqref{eq:lbsmooth}  becomes $$\var(\hat{\mu}_n)\geq 2^{-3-d-2/d}dn^{-1-2/d}.$$ 
Note that $f_1(X)$ is a Lipschitz function whose modulus $M$ satisfies
$$M\le \sum_{i=1}^d \sup_{X\in[0,1]^d}\left\lvert\frac{\partial f_1(X)}{\partial X_i}\right\rvert=12^{d/2}2^{1-d}d.$$
Together with \eqref{eq:upperbd}, we obtain an upper bound 
$$\var(\hat{\mu}_n)\leq 16(d+3)3^dd^2n^{-1-2/d}.$$ Figure~\ref{fig:bounds} shows the natural logarithm of the empirical variances of the HSFC-based estimator for $n=2^m$, $m=0,\dots,18$. The true variance of  Monte Carlo sampling is $1/n$ for all $d\geq 1$. The lower bound and the upper bound above are also presented. We observe that the  empirical variances decay at the rate $n^{-2}$ for $d=2$, and at the rate $n^{-5/4}$ for $d=8$. This supports that the rate $n^{-1-2/d}$ for the HSFC sampling is tight for smooth functions. Figure~\ref{fig:density1} displays smoothed density estimations of the standardized errors for plain Monte Carlo, LS--Sobol', NUS--Sobol', and HSFC. As expected, a nearly normal distribution appears for both the Monte Carlo and HSFC schemes. For the nested uniform scrambling scheme, a nearly normal distribution is also observed for $d=2$. This is because Sobol' sequence is a  $(t,d)$-sequence in base $b=2$ with $t=0$ for $d=2$ and $t>0$ for $d=8$ \cite{dick:2008}. Therefore, the CLT holds for $d=2$, as confirmed by \cite{loh:2003}. For $d=8$, on the other hand, the density of the standardized errors does not look like a normal distribution. Even worse, for the linear scrambling scheme, the distribution of the standardized errors is very different from the normal distribution. It looks rather spiky for $d=2$.

\begin{figure}[ht]	
	\includegraphics[width=\hsize]{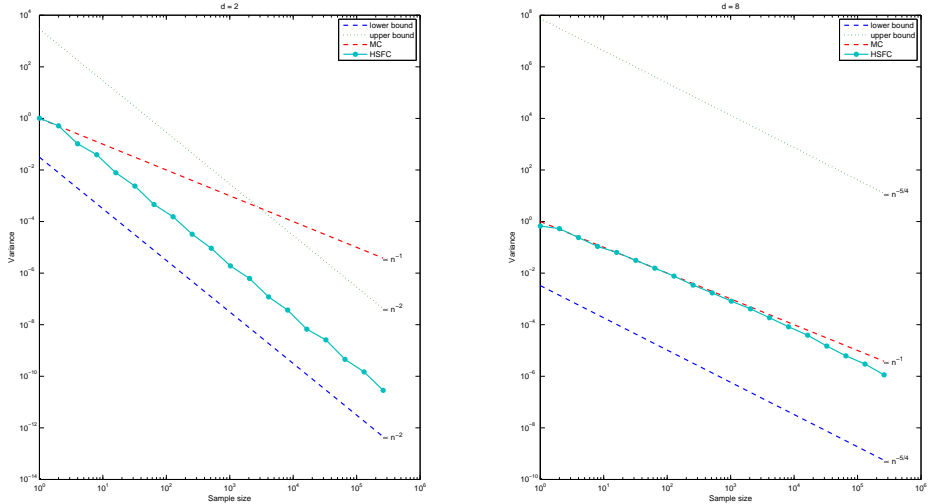}
	\caption{Decay of empirical variance of HSFC sampling as a function of sample size in a log-log scale for $d=2,8$.  The lower bound and the upper bound for the variance are  included. The true variance of Monte Carlo (MC) sampling is also presented for comparison.}\label{fig:bounds}
\end{figure}

\begin{figure}[ht]	
	\includegraphics[width=\hsize]{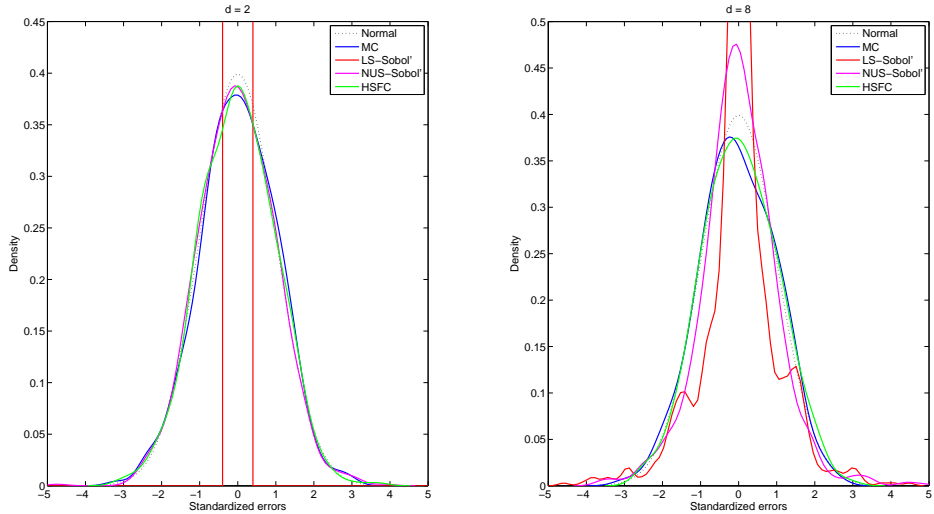}
	\caption{Empirical verification of asymptotic normality for the integrations of the smooth function $f_1(X)$ with plain Monte Carlo (MC), LS--Sobol', NUS--Sobol', and HSFC, the dot curve is the true density of the standard normal $N(0,1)$.}\label{fig:density1}
\end{figure}

For the two discontinuous functions $f_2(X)$ and $f_3(X)$, the CLT holds for the HSFC sampling (see Sections~\ref{sec:disc} and \ref{sec:ind} for details).  Figures~\ref{fig:density2} and \ref{fig:density3} show smoothed density estimations of the standardized errors for the two functions, respectively. As expected, a nearly normal distribution appears for both the Monte Carlo and HSFC schemes with $d=2,8$. More interestingly, a nearly normal distribution is also observed for the nested uniform scrambling scheme in all cases, although it is not clear whether the CLT holds for scrambled net integrations of discontinuous functions. Similar to the case of the smooth function, the integration error distribution for the  linear scrambling scheme is far from the normal distribution, particularly for $d=2$. Comparing to the nested uniform scrambling, the linear scrambling requires less randomness, and therefore its samples may be strongly dependent. That might explain why the CLT does not hold in most cases for randomized QMC with the linear scrambling.

\begin{figure}[ht]	
	\includegraphics[width=\hsize]{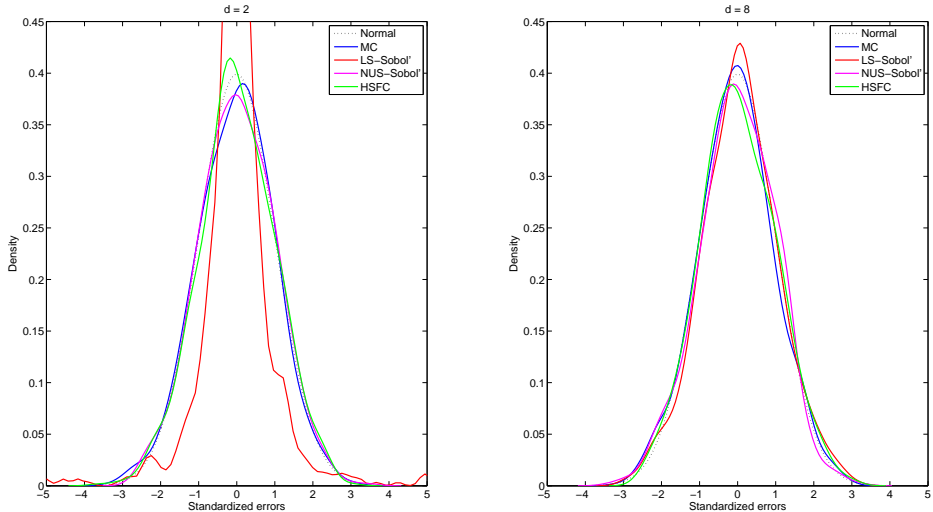}
	\caption{Empirical verification of asymptotic normality for the integrations of the piecewise smooth function $f_2(X)$ with plain Monte Carlo (MC), LS--Sobol', NUS--Sobol', and HSFC, the dot curve is the true density of the standard normal $N(0,1)$.}\label{fig:density2}
\end{figure}

\begin{figure}[ht]	
	\includegraphics[width=\hsize]{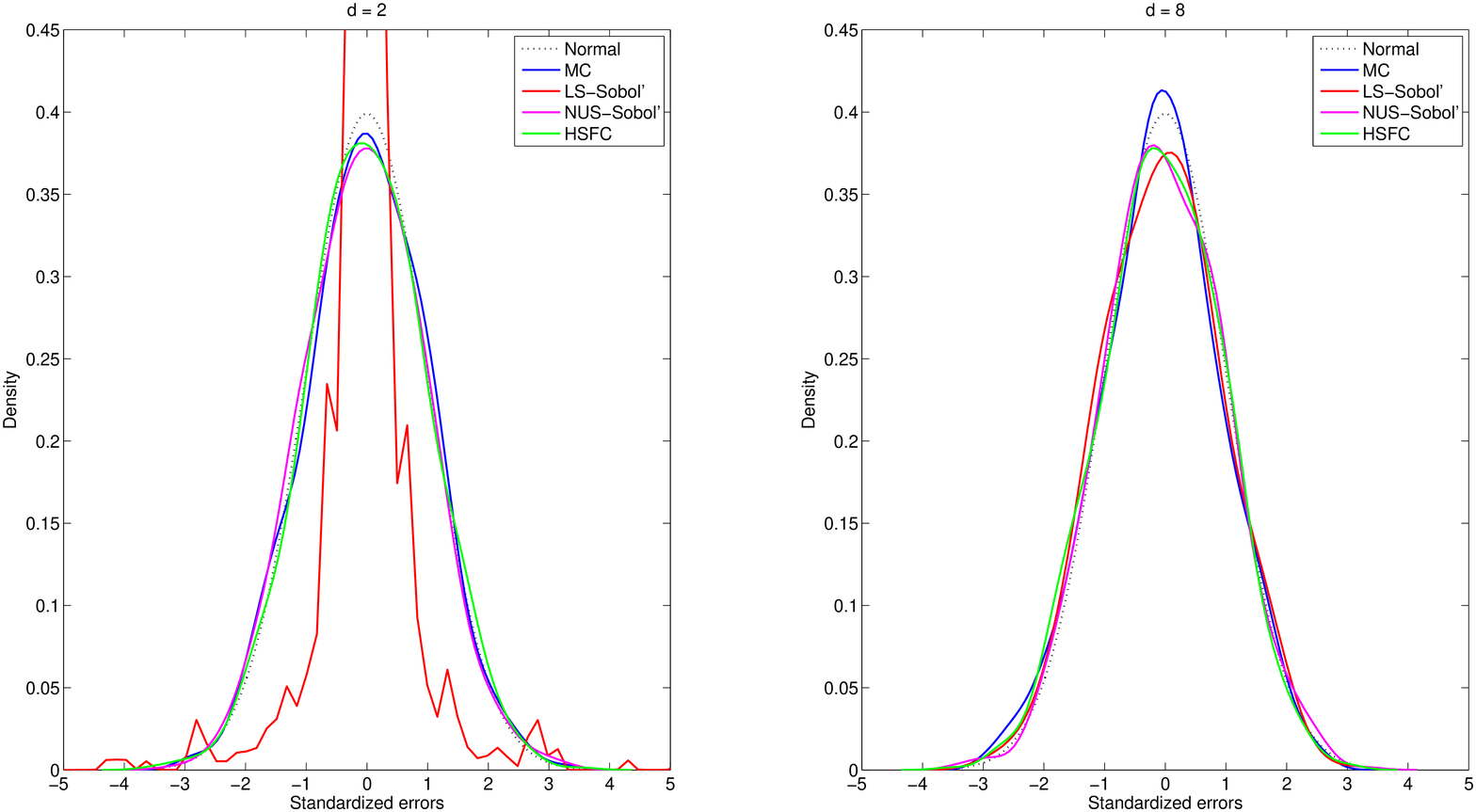}
	\caption{Empirical verification of asymptotic normality for the integrations of the indicator function $f_3(X)$ with plain Monte Carlo (MC), LS--Sobol', NUS--Sobol', and HSFC, the dot curve is the true density of the standard normal $N(0,1)$.}\label{fig:density3}
\end{figure}

\section{Concluding Remarks}\label{eq:final}
\cite{loh:2003} showed that the scrambled net estimate has an
asymptotic normal distribution for certain smooth functions. In a very recent work, \cite{basu:2016} found that the scrambled geometric net estimate has an
asymptotic normal distribution for certain smooth functions defined
on products of suitable subsets of $\Re^d$. The smoothness conditions required in the two papers are more restrictive than the smooth condition required in Section~\ref{sec:smooth}. The proofs in both \cite{loh:2003} and \cite{basu:2016} relied on ensuring a suitable lower bound on the variance of the estimate matching up to constants to the upper bound. The proofs in this paper relies on establishing a suitable lower bound, and then make use of the Lyapunov CLT. 

We also  proved the asymptotic normality of the HSFC-based stratified estimate for certain discontinuous functions. To our best knowledge, it is not clear whether the asymptotic normality of the scrambled net estimate holds for discontinuous functions. \cite{he:wang:2015} provided some upper bounds of scrambled net variances for piecewise smooth functions of the same form  $f(X)=g(X)1_{\Omega}(X)$ studied in Section~\ref{sec:disc}, but $g$ is of bounded variation in the sense of Hardy and Krause instead. For future research, following the procedures in \cite{loh:2003}, it is desirable to establish a matching lower bound for the variance of scrambled net integration of discontinuous functions.

\cite{he:owen:2016} used randomized van der Corput sequence in base $b$ as the input of the HSFC  sampling. This makes the sampling scheme extensible. As in \cite{loh:2003} and  \cite{basu:2016}, the analysis in this paper is based on the sample size with the pattern $n=b^m$, not with arbitrary $n$. This scheme turns out to be a kind of stratified samplings. In contrast to the usual grid sampling, it is extensible and  does not require so highly composite sample sizes, particularly for large $d$. The results on the HSFC sampling can also be applied to the usual grid sampling.

\section*{Acknowledgments}
The authors gratefully thank  Professor Art B. Owen for the helpful comments. 
Zhijian He is supported by the National Science Foundation of
China under Grant 71601189. 
Lingjiong Zhu is supported by the NSF Grant DMS-1613164.

\end{document}